\definecolor{blau}{rgb}{0.1,0.0,0.9}
\definecolor{gruen}{cmyk}{1.0,0.2,0.7,0.07}
\definecolor{mag}{cmyk}{0.0,0.9,0.3,0.0}
\newtheorem{theorem}{Theorem}[section]
\newtheorem{lemma}[theorem]{Lemma}
\newtheorem{corollary}[theorem]{Corollary}
\theoremstyle{definition}
\begin{document}
\date{\today}
\title{Restricted extension of sparse partial edge colorings of complete graphs}

\author{
{\sl Carl Johan Casselgren}\footnote{Department of Mathematics, Link\"oping University, SE-581 83 Link\"oping, Sweden. \newline
{\it E-mail address:} carl.johan.casselgren@liu.se  \, Casselgren was supported by a grant from the Swedish
Research Council (2017-05077)}
\and {\sl Lan Anh Pham }\footnote{Department of Mathematics, 
Ume\aa\enskip University, 
SE-901 87 Ume\aa, Sweden.
{\it E-mail address:} lan.pham@umu.se
}
}
\maketitle

\bigskip
\noindent
{\bf Abstract.}
Given a partial edge coloring of a complete graph $K_n$ and lists of allowed
colors for the non-colored edges of $K_n$, can we extend the partial edge coloring
to a proper edge coloring of $K_n$ using only colors from the lists?
We prove that this question has a positive answer in the case when both the
partial edge coloring and the color lists satisfy certain sparsity conditions.

\bigskip

\noindent
\small{\emph{Keywords: Complete graph, Edge coloring, Precoloring extension,
List coloring}}

\section{Introduction}

	An {\em edge precoloring} (or {\em partial edge coloring}) 
	of a graph $G$ is a proper edge coloring of some 
	subset $E' \subseteq E(G)$; {\em a $t$-edge precoloring}
	is such a coloring with $t$ colors.	
	A $t$-edge precoloring $\varphi$ is
	{\em extendable} if there is a proper $t$-edge coloring $f$
	such that $f(e) = \varphi(e)$ for any edge $e$ that is colored
	under $\varphi$; $f$ is called an {\em extension}
	of $\varphi$. 
	
	In general, the problem of extending a given edge precoloring
	is an $\mathcal{NP}$-complete problem, 
	already for $3$-regular bipartite graphs \cite{Fiala}.
	
	Questions on extending a partial edge coloring seems to have
	been first considered for balanced complete bipartite graphs, and
	these questions are usually referred to as the problem of
	completing partial Latin squares.
	In this form the problem appeared already in 1960, when Evans 
	\cite{Evans}  stated his now classic  conjecture  that for
	every positive integer $n$, if 
	$n-1$ edges in $K_{n,n}$ have been (properly) colored, 
	then the partial coloring can be extended to 
	a proper $n$-edge-coloring of $K_{n,n}$. 
	This conjecture was
	solved for large 
	$n$ by H\"aggkvist  \cite{Haggkvist78} and later for all $n$ by Smetaniuk 
	\cite{Smetaniuk}, 
	and independently by Andersen and Hilton \cite{AndersenHilton}.
	Similar questions have also been investigated for complete graphs
	\cite{AndersenHilton2}.
	Moreover, quite recently, Casselgren et al. \cite{CasselgrenMarkstromPham}
	proved an analogue of this result
	for hypercubes.
	
	Generalizing this problem, Daykin and H\"aggkvist \cite{DH}
proved several results on
extending partial edge colorings of $K_{n,n}$, and they also conjectured that  much denser partial colorings  can be extended, as long as the 
colored edges are spread out in a specific sense:
a partial $n$-edge coloring of $K_{n,n}$ is 
{\em $\epsilon$-dense} if there are at most $\epsilon n$ colored edges 
from $\{1,\dots,n\}$
at any vertex and each color in $\{1,\dots,n\}$
is used at most $\epsilon n$ times
in the partial coloring. 
Daykin and H\"aggkvist \cite{DH} conjectured
that for every positive integer $n$,
every $\frac{1}{4}$-dense partial proper $n$-edge
coloring can be extended
to a proper $n$-edge coloring of $K_{n,n}$,
and proved a version of the conjecture for $\epsilon=o(1)$ 
(as $n \to \infty$) and $n$ divisible by 16.
Bartlett \cite{Bartlett} proved  that this conjecture holds for 
a fixed positive $\epsilon$, and recently a 
different proof which improves the value of $\epsilon$ was 
given in \cite{BKLOT}.

For general edge colorings of  balanced complete
bipartite graphs, Dinitz conjectured,
and Galvin proved \cite{Galvin}, that if each edge of $K_{n,n}$
is given a list of $n$ colors, then there is a proper edge 
coloring of $K_{n,n}$ with support in the lists. 
Indeed, Galvin's result was a complete solution
of the well-known List Coloring Conjecture
for the case of bipartite multigraphs
(see e.g. \cite{haggkvist1992some} for more background on this conjecture and its relation
to the Dinitz' conjecture).

Motivated by the Dinitz' problem,  H\"aggkvist \cite{Ha89} 
introduced the notion of {\em $\beta n$-arrays},
which correspond to list assignments $L$ of forbidden colors for 
$E(K_{n,n})$,
such that each edge $e$ of $K_{n,n}$ is assigned a list $L(e)$
of at most $\beta n$ forbidden colors from $\{1,\dots,n\}$, 
and at every vertex $v$ each color is forbidden on 
at most $\beta n$ edges incident to $v$;
we call 
such a list assignment for $K_{n,n}$ {\em $\beta$-sparse}. 
If $L$ is a list assignment for $E(K_{n,n})$, then
a proper $n$-edge coloring $\varphi$ of $K_{n,n}$ {\em avoids}
the list assignment $L$ if $\varphi(e) \notin L(e)$ for 
every edge $e$ of $K_{n,n}$;
if such a coloring exists, then $L$ is {\em avoidable}.
H\"aggkvist conjectured that there exists a fixed 
$\beta>0$, in fact also that $\beta=\frac{1}{3}$,  such that
for every positive integer $n$,
every $\beta$-sparse list assignment for $K_{n,n}$ is avoidable.
That such a $\beta>0$ exists was proved
for 
even $n$
by Andr\'en  in her PhD thesis \cite{Andren2010latin},
and later for all 
$n$ in \cite{AndrenCasselgrenOhman}.

Combining the notions of extending a sparse precoloring and 
avoiding a sparse list assignment,
Andr\'en et al. \cite{AndrenCasselgrenOhman} proved that 
there are constants $\alpha> 0$ and $\beta> 0$, 
such that for every positive integer $n$,
every $\alpha$-dense partial edge
coloring of $K_{n,n}$ can be extended 
to a proper $n$-edge-coloring avoiding any given $\beta$-sparse
list assignment $L$, provided that no edge $e$ is precolored by a color
that appears in $L(e)$. Quite recently, Casselgren et al.
obtained analogous results for hypercubes \cite{CasselgrenMarkstromPham2}.

In this paper, we consider the corresponding problem for complete graphs.
As mentioned above, edge precoloring extension problems have previously been
considered for complete graphs. The type of questions that we are
interested in here, however, seems to be a hitherto quite unexplored line
of research.
To state our main result, we need to introduce some terminology.
If $n$ is even, let $m=2n-1$ and if $n$ is odd, let $m=2n$.

A partial edge coloring of $K_{2n}$ or $K_{2n-1}$ with colors $1,\dots,m$ is
{\em $\alpha$-dense} if
\begin{itemize}
\item[(i)] every color appears on at most $\alpha m$ edges;
\item[(ii)] for every vertex $v$, at most $\alpha m$ edges incident to $v$ are precolored.
\end{itemize}

A list assignment $L$ for a complete graph $K_{2n}$ or $K_{2n-1}$ 
on the color set 
$\{1,\dots,m\}$ is  {\em $\beta$-sparse} if 
\begin{itemize}
\item[(i)] $|L(e)| \leq \beta m$ for every edge; 
\item[(ii)] for every vertex $v$, every color appears in the lists of at most $\beta m$ edges incident to $v$.
\end{itemize}

Our main result is the following.

\begin{theorem}
\label{mainth}
	There are constants $\alpha> 0$  and $\beta> 0$ such that  
	for every positive integer $n$, if $\varphi$  
	is an $\alpha$-dense $m$-edge precoloring of $K_{2n}$, $L$ is a
	$\beta$-sparse list assignment for $K_{2n}$ from the
	color set $\{1,\dots,m\}$,
	and $\varphi(e) \notin L(e)$ for every edge $e \in E(K_{2n})$,
	then there is a proper
	$m$-edge coloring of $K_{2n}$ which agrees with $\varphi$
	on any precolored edge and which avoids $L$.
\end{theorem}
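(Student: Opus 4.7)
The plan is to adapt the framework of Andr\'en-Casselgren-\"Ohman \cite{AndrenCasselgrenOhman} from $K_{n,n}$ to the complete graph setting, exploiting the 1-factorization of $K_{2n}$ into $m=2n-1$ perfect matchings. Fix a reference 1-factorization $\mathcal{F}=\{F_1,\ldots,F_m\}$; a proper $m$-edge coloring of $K_{2n}$ in which each color class is a perfect matching is then specified by a bijection from $\mathcal{F}$ to $\{1,\ldots,m\}$. The proof would proceed in three phases: (i) produce an initial proper $m$-edge coloring $f_0$ that extends $\varphi$, (ii) show that the conflicts $C=\{e : f_0(e)\in L(e)\}$ are few and well-distributed, and (iii) eliminate the conflicts by local recolorings that respect the precoloring.

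For (i), the $\alpha$-density bounds leave ample room to absorb $\varphi$ into a modified 1-factorization via Hall-type matching arguments, analogous to the classical partial-Latin-square completion results used for $K_{n,n}$. The construction of $f_0$ should be randomized---for instance by composing with a uniformly random permutation of the colors that are not fixed by $\varphi$---so that, in (ii), linearity of expectation combined with $\beta$-sparseness bounds $\mathbb{E}[|C|]$ by something of order $\beta m^2$. A concentration argument (Azuma--McDiarmid or the Lov\'asz local lemma) would then yield a particular $f_0$ whose conflicts are spread out: few conflicts meet at any vertex and few conflicts share any single color.

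The third phase is the main obstacle. For each conflict edge $e$ with $f_0(e)=c$ we want to find a color $c'$ and a Kempe-type swap along the $(c,c')$-chain containing $e$ that removes the conflict, creates no new conflict, and leaves the precolored edges untouched. In the bipartite case of $K_{n,n}$, two-color Kempe chains are disjoint even cycles and paths, which makes such swaps tractable; in $K_{2n}$ this is genuinely harder because a union of two perfect matchings can decompose into even cycles only in a special way, and careless two-color swaps in a non-bipartite host can behave badly near the precolored edges. We would address this by working with shorter alternating substructures---swaps supported on a $(c,c')$-alternating path between $e$ and a ``free'' vertex, or three-color alternating walks---where the sparsity hypotheses guarantee an abundance of viable swap colors $c'$ that simultaneously (a) do not appear on precolored edges incident to the endpoints of $e$, (b) introduce no new conflicts on the swapped path, and (c) do not amplify the number of remaining conflicts. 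Taking $\alpha$ and $\beta$ sufficiently small so that all of these counting estimates hold at every iteration of the fix-up process, and verifying that the conflict count strictly decreases, then completes the argument.
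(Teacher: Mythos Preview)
Your outline has a genuine gap in phase (iii), and the paper's route around it is structurally different from anything you propose.

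First, a small correction: the union of two perfect matchings in $K_{2n}$ is always a $2$-regular graph, hence a disjoint union of even cycles, so two-color Kempe components in a $1$-factorization of $K_{2n}$ are no worse in that respect than in $K_{n,n}$. The real obstacle is not that the components fail to be even cycles, but that they can be long (possibly Hamiltonian), so a full Kempe swap may touch many precolored edges and many potential conflicts at once. Your proposed remedy---``shorter alternating substructures'' or ``three-color alternating walks''---is not made concrete, and it is not clear how to guarantee that a generic $1$-factorization of $K_{2n}$ supplies enough short swap substructures at every conflict. This is precisely where the argument would need a new idea, and the proposal does not supply one.

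The paper avoids this issue entirely by a different decomposition. It writes $K_{2n}=K_{n,n}\cup G_1\cup G_2$ with $G_1,G_2\cong K_n$, and builds a \emph{standard} coloring that uses colors $\{1,\dots,n\}$ on $K_{n,n}$ and $\{n+1,\dots,m\}$ on $G_1\cup G_2$. The crucial feature is that the bipartite part is colored so that almost every edge lies in roughly $n/2$ \emph{two-colored $4$-cycles}; this is the machinery from \cite{AndrenCasselgrenMarkstrom, Bartlett}. All recolorings are then performed as sequences of swaps on such $4$-cycles, routed through $K_{n,n}$, with a handful of specialized lemmas (Lemmas \ref{swapedgeinK}--\ref{swapedgecolor}) to move colors in and out of $G_1,G_2$ when needed. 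Because each swap touches only four edges, one can control exactly which prescribed edges, conflicts, and overloaded colors are affected.

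There is also a difference in the order of operations. The paper does \emph{not} first produce a coloring extending $\varphi$ and then repair conflicts. It starts from the fixed standard coloring $h$, applies a random vertex permutation to spread out conflicts and prescribed edges (Lemma \ref{alpha}), folds the conflict edges into an enlarged precoloring $\varphi'$ (Lemma \ref{gamma}), and then iteratively forces the coloring to agree with $\varphi'$ one prescribed edge at a time via the $4$-cycle swap lemmas (Lemma \ref{correctedge}). Your phase (i), which asks for an extension of $\varphi$ up front via ``Hall-type matching arguments'', is itself a nontrivial completion problem for complete graphs and is not an obvious consequence of the bipartite results you cite; the paper sidesteps it by never needing such an initial extension.
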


Since any complete graph $K_{2n-1}$ of odd order is a subgraph of $K_{2n}$,
Theorem \ref{mainth} holds for any $\alpha$-dense $m$-edge precoloring and
any $\beta$-sparse list assignment for $K_{2n-1}$.
Hence, we have the following. For an integer $p$, we
define
$t= 4r-1$ if $p=4r$ or $p=4r-1$, and $t= 4r-2$ if $p=4r-2$ or $p =4r-3$.

\begin{corollary}
\label{cor:main}
	There are constants $\alpha> 0$  and $\beta> 0$ such that  
	for every positive integer $p$, if $\varphi$ is an $\alpha$-dense $t$-edge
	precoloring of $K_p$, $L$ is a $\beta$-sparse list assignment
	from the color set $\{1,\dots,t\}$, and $\varphi(e) \notin L(e)$ for every
	edge $e \in E(K_{p})$, then there is a proper $t$-edge coloring of $K_p$
	which agrees with $\varphi$ on any precolored edge and which avoids $L$.
\end{corollary}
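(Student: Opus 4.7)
The plan is to reduce Corollary \ref{cor:main} directly to Theorem \ref{mainth} by embedding $K_p$ into an appropriate $K_{2n}$ when $p$ is odd and extending both the precoloring and the list assignment trivially on the added edges. The definition of $t$ is engineered precisely so that the number of permitted colors matches the quantity $m$ of Theorem \ref{mainth}, so no rescaling of the constants $\alpha$ and $\beta$ is needed.

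First I would verify that $t = m$ in each of the four residue classes of $p$ modulo $4$. If $p = 4r$, set $n = 2r$, so that $K_p = K_{2n}$ with $n$ even, giving $m = 2n - 1 = 4r - 1 = t$. If $p = 4r - 1$, set $n = 2r$, so that $K_p \subset K_{2n}$ with $n$ even, again giving $m = 4r - 1 = t$. If $p = 4r - 2$, set $n = 2r - 1$, so that $K_p = K_{2n}$ with $n$ odd, giving $m = 2n = 4r - 2 = t$. If $p = 4r - 3$, set $n = 2r - 1$, so that $K_p \subset K_{2n}$ with $n$ odd, giving $m = 4r - 2 = t$. In every case the color set $\{1, \dots, t\}$ coincides with $\{1, \dots, m\}$, so the hypotheses on $\varphi$ and $L$ are stated with respect to the same set of colors as in Theorem \ref{mainth}.

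Next, view $K_p$ as a spanning subgraph of $K_{2n}$ in the relevant case, extend $\varphi$ to $K_{2n}$ by leaving each edge of $E(K_{2n}) \setminus E(K_p)$ uncolored, and extend $L$ by setting $L(e) = \emptyset$ on these edges. I would then check that the extended data still satisfy the hypotheses of Theorem \ref{mainth}: the number of edges of each color is unchanged, so the global color bound of at most $\alpha m$ still holds; any vertex of $K_{2n}$ not in $K_p$ has no precolored incident edge, while a vertex of $K_p$ keeps its precolored incident edge count of at most $\alpha t = \alpha m$. The same counting shows that the extended $L$ remains $\beta$-sparse, and the compatibility $\varphi(e) \notin L(e)$ is preserved trivially because the new lists are empty.

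Finally, applying Theorem \ref{mainth} produces a proper $m$-edge coloring $f$ of $K_{2n}$ that agrees with $\varphi$ on precolored edges and avoids $L$, and restricting $f$ to $E(K_p)$ yields a proper edge coloring using only the colors $\{1, \dots, t\}$ that extends the original $\varphi$ and avoids the original $L$. There is no real obstacle here: the corollary is a genuine consequence of the theorem, and the only work is the parity bookkeeping in the first step, which is exactly what the piecewise definition of $t$ encodes.
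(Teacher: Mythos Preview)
Your proposal is correct and follows exactly the paper's approach: the paper derives the corollary in one sentence by noting that $K_{2n-1}$ is a subgraph of $K_{2n}$, and you have simply spelled out the parity bookkeeping and the trivial extension of $\varphi$ and $L$ in more detail. One minor terminological slip: when $p$ is odd you embed $K_p$ as an \emph{induced} subgraph of $K_{2n}$, not a spanning one, but this does not affect the argument.
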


Note that the number of colors in the preceding corollary is best possible
if $p \in \{4r, 4r-1\}$, while it remains an open question whether
$t=4r-2$ can be replaced by $t=4r-3$ if $p \in \{4r-2, 4r-3\}$.

The rest of the paper is devoted to the proof of Theorem
\ref{mainth}. The proof of this theorem is simlar to the proof of the
main result of \cite{AndrenCasselgrenMarkstrom}. However, we shall need to generalize
several tools from \cite{AndrenCasselgrenMarkstrom, Bartlett}
to the setting of complete graphs.

\section{Terminology, notation and proof outline}
	Let $\{p_1,p_2,\dots,p_n, q_1,q_2,\dots,q_n\}$ be 
	$2n$ vertices of the complete graph $K_{2n}$
	and $G_1$ ($G_2$) be the induced subgraph
	of $K_{2n}$ on the vertex set 
	$\{p_1,p_2,\dots,p_n\}$ ($\{q_1,q_2,\dots,q_n\}$) and
	$K_{n,n}$ be the complete bipartite graph
	with the partite sets $\{p_1,p_2,\dots,p_n\}$ and $\{q_1,q_2,\dots,q_n\}$.
	It is obvious that the complete graph $K_{2n}$
	is the union of the complete bipartite
	graph $K_{n,n}$ and the two copies $G_1$ and $G_2$ of the complete graph $K_n$.
	For any proper edge coloring $h$ of $K_{2n}$, we denote by $h_K$
	the restriction of this coloring to $K_{n,n}$; similarly,
	$h_{G_1}$ and $h_{G_2}$ are the restrictions of $h$
	to the subgraphs $G_1$ and $G_2$, respectively.
	
	For a vertex $u \in V(K_{2n})$, we denote by $E_u$ the set of 
	edges with one endpoint being $u$,
	and for a (partial) edge coloring $f$ of $K_{2n}$,
	let $f(u)$ denote the set of colors on the edges in $E_u$ under $f$.
	Let $\varphi$ be an $\alpha$-dense precoloring of $K_{2n}$.
	Edges of $K_{2n}$ which are colored under $\varphi$,
	are called {\em prescribed (with respect to $\varphi$)}.
	For an edge coloring $h$ of $K_{2n}$, 
	an edge $e$ of $K_{2n}$ is called
	{\em requested (under $h$ with respect to $\varphi$)}
	if $h(e) = c$ and
	$e$ is adjacent to an edge $e'$ such that $\varphi(e')=c$.
	
	Consider a $\beta$-sparse list assignment $L$ for $K_{2n}$.
	For an edge coloring $h$ of $K_{2n}$, 
	an edge $e$ of $K_{2n}$ is called a \textit{conflict edge (of $h$ with 
	respect to $L$)} if $h(e) \in L(e)$;
	such edges are also referred to as just {\em conflicts}.
	An \textit{allowed cycle (under $h$ 
	with respect to $L$)} of $K_{2n}$ is a $4$-cycle
	$\mathcal{C}=uvztu$ in $K_{2n}$ that is $2$-colored under $h$, and such that
	interchanging colors on $\mathcal{C}$ yields a proper edge coloring
	$h_1$ of $K_{2n}$ where none of $uv$, $vz$, 
	$zt$, $tu$ is a conflict edge. 
	We call such an interchange {\em a swap on $h$}, or a {\em swap on $\mathcal{C}$}.
\vspace{0.5cm}

\begin{enumerate}

	\item[Step I.] Define a {\em standard $m$-edge coloring $h$} 
	of the complete graph $K_{2n}$. In particular, this coloring has the property
	that ``most'' edges of $K_{n,n}$ are contained in a large number of
	$2$-colored $4$-cycles.

	\item[Step II.]
	Given the standard $m$-edge coloring $h$
	of $K_{2n}$, 
	from $h$ we construct a new
	%
	proper $m$-edge-coloring $h'$ 
	that satisfies
	certain sparsity conditions. These conditions shall be more precisely
	articulated below.

	\item[Step III.] From the precoloring $\varphi$ of $K_{2n}$, we
	define a new edge precoloring $\varphi'$ such that an edge 
	$e$ of $K_{2n}$ is colored 
	under $\varphi'$ if and only if $e$ is colored under 
	$\varphi$ or $e$ is a conflict edge
	of $h'$ with respect to $L$. We shall also require that
	each of the colors in $\{1,\dots,m\}$ is used a bounded number of times 
	under $\varphi'$.
	
	\item[Step IV.] 
	In this step we prove a series of lemmas which roughly implies that
	for almost all pair of edges $e$ and $e'$ in $K_{2n}$, we can construct
	a new edge coloring $h^T$ from $h'$ (or a coloring obtained from $h'$)
	such that $h^T(e')=h'(e)$ by recoloring a ``small'' subgraph of $K_{2n}$.

	\item[Step V.] Using the lemmas proved in the previous step,
	we shall in this step from $h'$ construct a coloring $h_q$ of $K_{2n}$
	that agrees with $\varphi'$ and which avoids $L$.
	This is done iteratively by steps:
	in each step we consider a prescribed edge $e$
	of $K_{2n}$, such that $h'(e) \neq \varphi'(e)$, and
	construct a subgraph $T_e$
	of $K_{2n}$, such that performing a series of swaps on allowed
	cycles, all edges of which are in $T_e$, we obtain a coloring
	$h''_1$ where $h''_1(e) = \varphi'(e)$.
	Hence, after completing this
	iterative procedure we obtain a coloring that is an extension of $\varphi'$
	(and thus $\varphi$), and which avoids $L$.

\end{enumerate}

	In Step IV and V we shall generalize several tools from
	\cite{AndrenCasselgrenMarkstrom, Bartlett} to the setting of complete graphs.

\section{Proof}

In this section we prove Theorem \ref{mainth}. In the proof we shall verify
that it is possible to perform Steps I-V described above
to obtain a proper $m$-edge-coloring of $K_{2n}$ that is an extension of $\varphi$
and which avoids $L$. This is done by proving some lemmas in each step.

The proof of Theorem \ref{mainth} involves a number of 
functions and parameters:
$$\alpha, \beta, d, \epsilon, k,  c(n), f(n)$$
and a number of inequalities that they must satisfy. For the reader's convenience,
explicit choices for which the proof holds are presented here:
$$\alpha = \frac{1}{1000000}, \quad \beta=\frac{1}{1000000}, \quad d= \frac{1}{200}, 
\quad
\epsilon =\frac{1}{50000},$$
$$k = \frac{1}{5000}, \quad c(n) = \left\lfloor\frac{n}{50000}\right\rfloor, \quad
f(n) = \left\lfloor\frac{n}{10000}\right\rfloor.$$
We shall also use the functions
$$c'(n) = c(n)/2, \quad  H(n) = 9 \alpha m + 9 f(n) + 6 c(n) + 4dn, \quad 
P(n)=dn+ \alpha  m + f(n).$$

Furthermore, we shall assume that $n$ is large enough whenever necessary. Since the
proof contains a finite number of inequalities that are valid if $n$ is large
enough, say $n \geq N$,
this suffices for proving the theorem with $\alpha'$ and $\beta'$
in place of $\alpha$ and $\beta$, and where we set 
$\alpha' = \min\{1/N, \alpha\}$
and $\beta' = \min\{1/N, \beta\}$.

We remark that since the numerical values of $\alpha$ and $\beta$ are not anywhere near
what we expect to be optimal, we have not put an effort into choosing optimal values
for these parameters. Since $K_{n,n}$ is a subgraph of $K_{2n}$, any upper bounds
on $\alpha$ and $\beta$ for the corresponding problem on 
complete bipartite graphs are also valid in the setting of complete graphs;
see \cite{AndrenCasselgrenMarkstrom} for a more
elaborate discussion on this question.

Finally, for simplicity of notation,
we shall omit floor and celling signs whenever these are not crucial.

\begin{proof}[Proof of Theorem \ref{mainth}]
Let $\varphi$ be an $\alpha$-dense precoloring of $K_{2n}$, and let $L$
be a $\beta$-sparse list assignment for $K_{2n}$
such that $\varphi(e) \notin L(e)$ for every edge $e \in E(K_{2n})$.

\bigskip

\noindent
{\bf Step I:} Below we shall define the {\em standard $m$-edge coloring $h$}
of the complete graph $K_{2n}$ by
defining an $n$-edge 
coloring for $K_{n,n}$ using the set of colors $\{1,2,\dots,n\}$ and
a $(m-n)$-edge 
coloring for $G_1$ and $G_2$ using the set
of colors $\{n+1,\dots,m\}$. 
Throughout this paper, we assume $x \mod k =k$ in the case when $x \mod k \equiv 0$.

Firstly, we define a proper $n$-edge coloring for $K_{n,n}$ 
using the set of colors $\{1,2,\dots,n\}$.  
This coloring was used in \cite{AndrenCasselgrenOhman, AndrenCasselgrenMarkstrom,
Bartlett}, and
we shall give the explicit construction for the case when $n$ is even.
For the case $n$ is odd, one can modify the construction in the even case by swapping 
on some $2$-colored $4$-cycles and using a transversal; the details are given in Lemma 2.1 in  \cite{Bartlett}.

So suppose that $n=2r$.
For $1\leq i, j \leq n$, the standard coloring $h_K$ for $K_{n,n}$ is defined as follows.
\begin{equation}
h_K(p_iq_j) = \left\{ \begin{array}{llcl}
 j-i +1 & \mod r & \mbox{for}  & i,j \leq r,\\
 i-j +1 & \mod r & \mbox{for} & i, j> r, \\
(j-i+1 &\mod r) +r & \mbox{for} & i \leq r, j>r,\\
 (i-j+1 &\mod r) + r & \mbox{for} & i > r, j \leq r.
\end{array}\right.
\end{equation}
%
The following property of $h_K$ is fundamental for our proof.
If a $2$-colored $4$-cycle with colors $c_1$ and $c_2$
satisfies that
$$|\{c_1, c_2\} \cap \{1,\dots, r\}|=1$$
then $C$ is called a {\em strong} $2$-colored 4-cycle.

\begin{lemma}
\cite{AndrenCasselgrenOhman,AndrenCasselgrenMarkstrom, Bartlett} 
Each edge in $K_{n,n}$ belongs to exactly $r$ distinct strong $2$-colored $4$-cycles
under $h_K$.
\end{lemma}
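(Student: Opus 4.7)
The plan is to go through the piecewise definition of $h_K$, identify which ``block'' of $K_{n,n}$ a given edge lies in, and then for each case enumerate the second color class that can appear in a strong $2$-colored $4$-cycle through the edge. The key observation is that the coloring divides $K_{n,n}$ into four blocks according to whether each index is $\le r$ or $>r$: on the two ``diagonal'' blocks (both indices $\le r$, or both $>r$) the color lies in $\{1,\dots,r\}$, and on the two ``off-diagonal'' blocks the color lies in $\{r+1,\dots,n\}$. Since opposite edges of a $2$-colored $4$-cycle share a color, the condition that the $4$-cycle be strong forces its two opposite edge pairs to lie in prescribed blocks.

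First I would fix an edge $p_iq_j$, say with $i,j\le r$, so $h_K(p_iq_j)\in\{1,\dots,r\}$. A candidate $4$-cycle is $p_iq_jp_kq_lp_i$. The opposite edge $p_kq_l$ must receive a color in $\{1,\dots,r\}$, and the edges $p_kq_j$, $p_iq_l$ must receive a common color in $\{r+1,\dots,n\}$. A quick check of blocks rules out $k,l\le r$ (it would force $p_kq_j$ into the diagonal block) and leaves only $k,l>r$. Plugging into the formulas, the equality $h_K(p_iq_j)=h_K(p_kq_l)$ reduces to $j-i\equiv k-l\pmod r$, and the equality $h_K(p_kq_j)=h_K(p_iq_l)$ reduces to the same congruence $k-l\equiv i-j\pmod r$. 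Thus the two color-matching conditions collapse into a single linear congruence.

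Next I would count solutions: for each $k\in\{r+1,\dots,2r\}$ the congruence determines $l\bmod r$, and since $l$ ranges over an interval of length exactly $r$ this pins down a unique $l\in\{r+1,\dots,2r\}$. So there are exactly $r$ valid pairs $(k,l)$, each giving a distinct strong $2$-colored $4$-cycle through $p_iq_j$ (the four vertices are automatically distinct because $i\le r<k$ and $j\le r<l$). The remaining three block-cases $(i,j>r)$, $(i\le r,\;j>r)$, $(i>r,\;j\le r)$ are handled by the same computation: in each case, block considerations uniquely determine which blocks $p_kq_l$ and the cross edges lie in, and the two color-equality conditions again reduce to one congruence of the form $k-l\equiv i-j\pmod r$ (or $l-k\equiv i-j\pmod r$), whose solution count in the relevant index ranges is again exactly $r$.

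The only real obstacle is bookkeeping: one must be careful about the convention $x\bmod k=k$ when $x\equiv 0$ (stated just before the definition of $h_K$) so that the congruence count is honestly $r$ in every range, and one must verify in each case that the ``forbidden'' configurations are genuinely excluded by the block structure rather than by an arithmetic accident. Once the four cases are checked, the conclusion that every edge lies in exactly $r$ distinct strong $2$-colored $4$-cycles follows immediately.
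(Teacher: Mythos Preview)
Your argument is correct and is the natural direct verification; note, however, that the paper does not actually prove this lemma but merely cites it from \cite{AndrenCasselgrenOhman,AndrenCasselgrenMarkstrom,Bartlett}, so there is no in-paper proof to compare against. One small slip: in the case $i,j\le r$, the second color-equality $h_K(p_kq_j)=h_K(p_iq_l)$ gives $k-j\equiv l-i\pmod r$, i.e.\ $k-l\equiv j-i\pmod r$, not $k-l\equiv i-j\pmod r$ as you wrote; since this coincides with the first congruence your conclusion that the two conditions collapse is still correct, and the count of $r$ solutions goes through exactly as you describe.
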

For the case when $n = 2r + 1$, we can construct an $n$-edge 
coloring $h_K$ for $K_{n,n}$ 
such that all but at most $3n+7$ edges are in $\left \lfloor{\frac{n}{2}}\right \rfloor$ strong $2$-colored 4-cycles.
In particular, there is a vertex in $K_{n,n}$ where no edge belongs 
to at least $\left \lfloor{\frac{n}{2}}\right \rfloor$  strong
$2$-colored $4$-cycles. The full proof appears in \cite{Bartlett} and therefore we omit the details here.

Secondly, let us define $(m-n)$-edge colorings of $G_1$ and $G_2$ using the set of colors $\{n+1,\dots,m\}$. 
Suppose first that $n$ is odd, and recall that $m=2n$. 
We define the colorings $h_{G_1}$ of $G_1$ and $h_{G_2}$ of $G_2$
by,
for $1\leq i, j \leq n$, 
setting
$$h_{G_1}(p_ip_j)=h_{G_2}(q_iq_j)=(i+j \mod n) + n.$$

Assume now that $n$ is even, and recall that $m=2n-1$.
We define the colorings
$h_{G_1}$ of $G_1$ and $h_{G_2}$ of $G_2$ as follows:

\begin{itemize}
\item $h_{G_1}(p_ip_j)=h_{G_2}(q_iq_j)=(i+j \mod n-1) + n$ for $1\leq i, j \leq n-1$.
\item $h_{G_1}(p_ip_n)=h_{G_2}(q_iq_n)=(2i \mod n-1) + n$ for $1\leq i, j \leq n-1$.
\end{itemize}

It is straightforward to verify that $h_K$, $h_{G_1}$, $h_{G_2}$ are proper colorings.
Taken together, the colorings $h_K$, $h_{G_1}$, $h_{G_2}$ constitute the
standard $m$-edge coloring $h$ of $K_{2n}$.

\bigskip

\noindent
{\bf Step II:} Let $h$ be the $m$-edge coloring of $K_{2n}$ obtained in Step I, and let $\rho=(\rho_1,\rho_2)$ 
be a pair of permutations chosen 
independently and uniformly at random from all $n!$ permutations of 
the vertex labels of $G_1$ and $n!$ permutations of the vertex labels of $G_2$.
We permute the labels of the vertices with respect to the coloring of $h$,
while $\varphi$ is considered as a fixed partial coloring of $K_{2n}$.
Thus we can view a relabeling of the vertices in $G_1$ and $G_2$ 
with respect to $h$
(while
keeping colors of edges fixed)
as equivalent to defining a new proper edge coloring of $K_{2n}$ from $h$
by recoloring edges in $K_{2n}$. Hence, 
we can
think of $\rho$ as being applied to the edge coloring $h$ of $K_{2n}$
thereby defining a new edge coloring of $K_{2n}$
(rather than permuting vertex labels).

Denote by $h'$ a random $m$-edge coloring obtained from $h$ by applying $\rho$ to $h$.
Note that if $u'=\rho(u)$ and $v'=\rho(v)$, then $h'(u'v')=h(uv)$.


\begin{lemma}
	\label{alpha}
         Suppose that 
				$\alpha, \beta, \epsilon$ are constants, and $c(n)$ and $c'(n)=c(n)/2$ are
				functions of $n$, 
				such that
          $n-1>2c(n)>4$ and
          $$\Big( \dfrac{4\beta}{\epsilon - 4\beta}\Big)^{\epsilon - 4\beta} 
					\Big( \dfrac{1}{1 - 2\epsilon + 8\beta}\Big)^{1/2-\epsilon + 4\beta} <1,$$
          $$\alpha, \beta < \dfrac{c(n)}{2(n-c(n))} 
					\Big(\dfrac{n - c(n)}{n}\Big)^{\frac{n}{c(n)}}, \, \text{ and }$$
           $$\beta<\dfrac{c'(n)}{2(n-c'(n))} 
					\Big(\dfrac{n - c'(n)}{n}\Big)^{\frac{n}{c'(n)}}.$$
       Then the probability that $h'$ fails the following 
			conditions tends to $0$ as $n \rightarrow \infty$.
        \begin{itemize}
	\item[(a)] All edges in $K_{n,n}$, except for $3n+7$, belong to at least 
	$\left \lfloor{\frac{n}{2}}\right \rfloor - \epsilon n$ allowed strong $2$-colored 4-cycles.
	
	\item[(b)] Each vertex of $K_{n,n}$ is incident to at most $c'(n)$ conflict edges in $K_{n,n}$.
	
	\item[(c)] For each color $c \in \{1,2,\dots,n\}$, 
	there are at most $c(n)$ edges in $K_{n,n}$ 
	that are colored $c$ that are conflicts.
	
	\item[(d)] For each color $c \in \{1,2,\dots,n\}$, 
	there are at most $c(n)$ edges in $K_{n,n}$ 
	that are colored $c$ that are prescribed.
	
	\item[(e)] 	For each pair of colors $c_1\in \{1,2,\dots,m\}$ and 
	$c_2 \in \{1,2,\dots,n\}$, there are at most $c(n)$ edges $e$ in $K_{n,n}$ 
	with color $c_2$ such that $c_1 \in L(e)$. 
	
	\item[(f)] Each vertex of $G_1$ $(G_2)$ is incident to at most $c'(n)$ conflict edges in $G_1$ $(G_2)$.

	\item[(g)] For each color $c \in \{n+1,n+2,\dots,m\}$, 
	there are at most $c(n)$ edges in $G_1$ $(G_2)$ 
	that are colored $c$ that are conflicts.
	
	\item[(h)] For each color $c \in \{n+1,n+2,\dots,m\}$, 
	there are at most $c(n)$ edges in $G_1$ $(G_2)$ 
	that are colored $c$ that are prescribed.
	
	\item[(i)] For each pair of colors $c_1\in \{1,2,\dots,m\}$ and 
	$c_2 \in \{n+1,n+2,\dots,m\}$, 
	there are at most $c(n)$ edges $e$ in $G_1$ $(G_2)$ with color $c_2$ 
	such that $c_1 \in L(e)$.
	\end{itemize}
		\end{lemma}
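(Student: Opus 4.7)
The plan is to bound, for each of the nine conditions (a)--(i), the probability that $h'$ violates that condition, and then to take a union bound. All the randomness is carried by the pair $\rho = (\rho_1, \rho_2)$ of independent uniform permutations, and the key structural fact about the standard coloring $h$ is that at every vertex of $K_{n,n}$ (respectively $G_1, G_2$), every color in $\{1,\dots,n\}$ (respectively in $\{n+1,\dots,m\}$) is used exactly once. After $\rho$ is applied, the map sending a color to the unique edge of that color at a fixed vertex becomes a uniformly random bijection, and each entire color class (a matching in $K_{n,n}$, a near-$1$-factor in $G_1$ or $G_2$) is the random $\rho$-image of a fixed template. Consequently, every count appearing in (b)--(i) is a sum of exchangeable, negatively correlated $0/1$ variables, for which standard Chernoff-type tail estimates are available.

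Conditions (b)--(e) and (f)--(i) are all linear counts of essentially the same form. For (b), fix $u$ and let $e_c(u)$ denote the (random) edge of color $c$ at $u$ in $K_{n,n}$; the map $c \mapsto e_c(u)$ is a uniformly random bijection, and by $\beta$-sparseness $\Pr[c \in L(e_c(u))] \le \beta m/n$, so the expected number of conflicts at $u$ is at most $\beta m$. The standard upper-tail estimate for a sum of exchangeable indicators under a random bijection gives
\[
\Pr\bigl[\#\text{conflicts at }u \ge c'(n)\bigr] \;\le\; \binom{n}{c'(n)}\Bigl(\tfrac{\beta m}{n - c'(n) + 1}\Bigr)^{c'(n)},
\]
and the hypothesis $\beta < \tfrac{c'(n)}{2(n-c'(n))}\bigl(\tfrac{n-c'(n)}{n}\bigr)^{n/c'(n)}$ is calibrated so that the right-hand side is $o(1/n)$; a union bound over the $2n$ vertices settles (b). The remaining conditions (c)--(e) fix a color $c$, look at the $n$-edge matching $M_c$ inside $K_{n,n}$ which is placed randomly by $\rho$, and bound the number of its edges that are conflicts, prescribed, or have a given color in their lists; in each case the expectation is at most $\alpha m$ or $\beta m$, and the second hypothesis inequality (with $c(n)$ in place of $c'(n)$) drives the tail bound, which one union-bounds over the at most $m \cdot n$ relevant color and color-color pairs. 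Conditions (f)--(i) are analogous, now applied to the random relabelings of $G_1$ and $G_2$ separately, with near-$1$-factors replacing matchings.

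The main obstacle is (a). By the cited Lemma and its odd-$n$ counterpart from \cite{Bartlett}, each non-exceptional edge $e$ of $K_{n,n}$ lies in exactly $r = \lfloor n/2\rfloor$ strong $2$-colored $4$-cycles of the template. A given such cycle with colors $c_1, c_2$ fails to be allowed iff one of its four edges has its post-swap color in its list; each of the four failure events has marginal probability at most $\beta m/m = \beta$, so the expected number of non-allowed strong cycles through $e$ is at most $4\beta r \le 2\beta n$, comfortably below $\epsilon n$. To turn this expectation bound into a high-probability statement, I would express the count as a sum of $r$ indicator variables, observe that each depends on $\rho$ only through the images of a constant number of vertices, and apply a Chernoff-type inequality to this weakly dependent sum. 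The first, somewhat elaborate, hypothesis inequality
\[
\Bigl(\tfrac{4\beta}{\epsilon - 4\beta}\Bigr)^{\epsilon - 4\beta} \Bigl(\tfrac{1}{1 - 2\epsilon + 8\beta}\Bigr)^{1/2 - \epsilon + 4\beta} < 1
\]
is precisely the numerical condition under which the resulting tail bound on $\Pr[\#\text{non-allowed strong cycles through }e \ge \epsilon n]$ is $o(1/n^2)$, so that a union bound over the at most $n^2$ edges of $K_{n,n}$ controls (a). Dealing with the dependence between indicators for different strong cycles sharing an edge is what makes this step delicate; the argument would adapt the corresponding calculations of \cite{AndrenCasselgrenMarkstrom, Bartlett} to the present $K_{2n}$ setting. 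A final union bound over the nine events then yields $\Pr[h' \text{ satisfies (a)--(i)}] \to 1$, as required.
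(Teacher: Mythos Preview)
Your overall strategy matches the paper's: handle (a)--(e) by citing Lemmas~3.2--3.4 of \cite{AndrenCasselgrenMarkstrom}, and for (f)--(i) take a union bound over vertices/colors/color-pairs with a tail estimate for each, tuned so that the hypothesis inequalities drive the bounds to $o(1/\mathrm{poly}(n))$.

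The main technical difference is in how the tail estimates are obtained. The paper does not invoke negative correlation or Chernoff. Instead it uses the Br\'egman--Minc-type permanent bound (Theorem~\ref{balancedbipartite}): for (f), after fixing $\rho_1(u_0)=u$, the number of bad $\rho_1$'s is bounded by the number of perfect matchings in an auxiliary bipartite graph with prescribed degrees; for (g), one first fixes the images $x'_i=\rho_1(x_i)$ of one endpoint of every edge in the template color class, and then again bounds perfect matchings to place the $y_i$'s. Stirling's formula then reduces everything to the stated hypothesis inequalities.

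Your assertion that the indicators in (b)--(i) are ``negatively correlated'' is not correct in general. For a random permutation $\sigma$ of $\{1,2\}$ with $X_i=\mathbf{1}[\sigma(i)=i]$, the $X_i$ are perfectly \emph{positively} correlated; the same phenomenon arises for the color-class indicators in (c), (g). What is actually true, and what your displayed bound
\[
\Pr\bigl[\#\text{conflicts at }u \ge c'(n)\bigr] \le \binom{n}{c'(n)}\Bigl(\tfrac{\beta m}{n-c'(n)+1}\Bigr)^{c'(n)}
\]
really uses, is the elementary subset union bound $\Pr[\sum X_i\ge k]\le\binom{n}{k}\max_{|S|=k}\Pr[X_i=1\ \forall i\in S]$ together with a direct estimate of the joint probability via sequential conditioning. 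That is exactly the permanent-counting approach of the paper, just phrased differently. For (g)--(i) in $G_1,G_2$ you should note that each template edge carries \emph{two} vertices through the single permutation $\rho_1$, so the simple ``random bijection at a vertex'' picture does not apply; this is why the paper's two-stage argument (fix all $x_i$-images first, then count placements of the $y_i$'s via the permanent bound) is needed. Once you drop the negative-correlation language and make that adjustment, your proposal and the paper's proof coincide.
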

		
	Note that the conditions in the lemma imply that the following holds for
	the coloring $h'$.
	\begin{itemize}
	\item[(a')]  Each vertex of $K_{2n}$ is incident to at most $c(n)$ conflict edges;
	\item[(b')]  For each color $c \in \{1,2,\dots,m\}$, 
	there are at most $c(n)$ edges in $K_{2n}$ 
	that are colored $c$ that are conflicts $($prescribed$)$;
	\item[(c')]  For each pair of colors $c_1, c_2 \in \{1,2,\dots,m\}$, 
	there are at most $c(n)$ edges $e$ in $K_{2n}$ 
	with color $c_2$ such that $c_1 \in L(e)$. 
	\end{itemize}

For the proof of this lemma we shall use the following theorem,
see e.g. \cite{AndrenCasselgrenMarkstrom}.

\begin{theorem}
\label{balancedbipartite}
If $B$ is a balanced bipartite graph on $2n$ vertices and $d_1,\dots,d_n$
are the degrees of the vertices in one part of $B$, then the number of perfect matchings in $B$
is at most $\prod_{1 \leq i \leq n} (d_i!)^{1/d_i}$.
\end{theorem}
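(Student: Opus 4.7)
The plan is to apply the entropy method of Radhakrishnan, which gives a short proof of the Br\'egman--Minc permanent bound; the theorem above is exactly that bound rephrased as a statement about perfect matchings of a balanced bipartite graph. Denote the specified part of $B$ by $U=\{u_1,\dots,u_n\}$, the other part by $V=\{v_1,\dots,v_n\}$, and set $d_i=\deg_B(u_i)$. Let $t$ be the number of perfect matchings of $B$, which we may assume positive, and let $M$ be a uniformly random perfect matching, viewed as a function $M\colon[n]\to[n]$ with $u_iv_{M(i)}\in E(B)$. Since $M$ is uniform, $H(M)=\log t$.

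I would next introduce, independently of $M$, a uniformly random permutation $\Pi$ of $[n]$, and write $\sigma=\Pi^{-1}$, so that the left vertices are processed in the order $u_{\sigma(1)},\dots,u_{\sigma(n)}$. For each $i$, define
$$N_i \;=\; \bigl|\{j:\ u_iv_j\in E(B)\text{ and }(j=M(i)\text{ or }\Pi(M^{-1}(j))>\Pi(i))\}\bigr|,$$
the number of neighbors of $u_i$ that remain unclaimed by the partial matching on vertices processed earlier than $u_i$. Independence gives $H(M)=H(M\mid\Pi)$. Applying the chain rule in the processing order and bounding each term via $H(X\mid Y)\le\log|\mathrm{supp}(X\mid Y)|$ yields
$$\log t \;\le\; \sum_{k=1}^n \mathbb{E}[\log N_{\sigma(k)}] \;=\; \sum_{i=1}^n \mathbb{E}[\log N_i],$$
where the crucial identification is that, conditional on $\Pi$ and on the earlier matches $M(u_{\sigma(1)}),\dots,M(u_{\sigma(k-1)})$, the support of $M(u_{\sigma(k)})$ has size exactly $N_{\sigma(k)}$.

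Finally I would compute $\mathbb{E}[\log N_i]$ by fixing $M$ and averaging only over $\Pi$. Setting $S_i=\{i\}\cup\{M^{-1}(j):\ j\ne M(i),\ u_iv_j\in E(B)\}$, one has $|S_i|=d_i$ and $N_i=|\{v\in S_i:\Pi(v)\ge\Pi(i)\}|=d_i+1-r$, where $r$ is the rank of $\Pi(i)$ within $\{\Pi(v):v\in S_i\}$. By symmetry of the uniform permutation, $r$ is uniform on $\{1,\dots,d_i\}$, hence so is $N_i$, and $\mathbb{E}[\log N_i]=\log(d_i!)/d_i$. Summing and exponentiating gives $t\le\prod_{i=1}^n (d_i!)^{1/d_i}$.

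The main (modest) technical point is justifying the support-size step: one must check pointwise in $(M,\Pi)$ that $|\mathrm{supp}(M(u_{\sigma(k)})\mid \Pi, M(u_{\sigma(1)}),\dots,M(u_{\sigma(k-1)}))|=N_{\sigma(k)}$. This is immediate from the definitions, since a candidate right-vertex $v_j$ for $M(u_{\sigma(k)})$ must be a neighbor of $u_{\sigma(k)}$ not already assigned, i.e., $\Pi(M^{-1}(j))\ge\Pi(\sigma(k))$. Once this is recorded, the rest of the argument is symbolic.
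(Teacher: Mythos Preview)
Your argument is the standard Radhakrishnan entropy proof of the Br\'egman--Minc bound, and it is essentially correct. The paper does not actually prove this theorem: it is quoted as a known tool with a reference (``see e.g.\ \cite{AndrenCasselgrenMarkstrom}''), so there is no in-paper proof to compare against. Your write-up supplies what the paper omits.

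One small inaccuracy worth fixing: you assert that the conditional support of $M(u_{\sigma(k)})$ given $\Pi$ and the earlier matches has size \emph{exactly} $N_{\sigma(k)}$. In general it only has size \emph{at most} $N_{\sigma(k)}$: an unclaimed neighbor of $u_{\sigma(k)}$ need not extend to a perfect matching compatible with the earlier assignments, so it may fail to lie in the support. This does not affect the proof, since the entropy bound $H(X\mid Y)\le \mathbb{E}\bigl[\log|\mathrm{supp}(X\mid Y)|\bigr]$ only requires an upper bound on the support size, and $|\mathrm{supp}|\le N_{\sigma(k)}$ suffices. Replace ``exactly'' by ``at most'' (and drop the claim in the last paragraph that equality is immediate) and the argument is clean.
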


\begin{proof}[Proof of Lemma \ref{alpha}]
Let $\alpha'=2\alpha$ and $\beta'=2\beta$; then the 
$\alpha$-dense precoloring $\varphi$ satisfies that
\begin{itemize}
\item[(I)] every color appears on at most $\alpha' n$ edges;
\item[(II)] for every vertex $v$, at most $\alpha' n$ edges incident with $v$ are precolored.
\end{itemize}

For the $\beta$-sparse list assignment $L$, we have
\begin{itemize}
\item[(III)] $|L(e)| \leq \beta' n$ for every edge of $K_{2n}$;
\item[(IV)] for every vertex $v$, every color appears in the lists 
of at most $\beta' n$ edges incident to $v$.
\end{itemize}

By applying Lemmas 3.2, 3.3, 3.4 in \cite{AndrenCasselgrenMarkstrom}, we deduce
that
the probability that $h'$ fails conditions (a), (b), (c), (d) or (e) tends to $0$ as $n \rightarrow \infty$ if
$$\Big( \dfrac{2\beta'}{\epsilon - 2\beta'}\Big)^{\epsilon - 2\beta'} \Big( \dfrac{1}{1 - 2\epsilon + 4\beta'}\Big)^{1/2-\epsilon + 2\beta'} <1;$$
$$\alpha', \beta'<\dfrac{c(n)}{(n-c(n))} \Big(\dfrac{n - c(n)}{n}\Big)^{\frac{n}{c(n)}};
\beta'<\dfrac{c'(n)}{(n-c'(n))} \Big(\dfrac{n - c'(n)}{n}\Big)^{\frac{n}{c'(n)}}.$$
Since all of these inequalities are true, our remaining job is to prove that
the probability that
$h'$ fails conditions (f), (g), (h) or (i) tends to $0$ as $n \rightarrow \infty$.

\vspace{0.3cm}
\begin{itemize}
\item We first prove (f) for $G_1$. Given a vertex $u \in G_1$,
it is obvious that $|E_u \cap E(G_1)|=n-1$.
We estimate the number of choices for the 
pair $\rho=(\rho_1,\rho_2)$ such that under $h'$ at least $c'(n)$ edges
in $E_u \cap E(G_1)$ are conflicts with $L$. 
There are $n!$ ways of choosing the permutation $\rho_2$;
fix such a permutation $\rho_2$. Also, there are $n$ choices for
a vertex $u_0$ such that $\rho_1(u_0)=u$,
we fix such a vertex $u_0$. 

Next, let $N$ be a subset of $E_{u_0} \cap E(G_1)$ such that $|N|=c'(n)$ and all edges in $N$ are mapped
to conflict edges by $\rho$, there are ${n-1} \choose c'(n)$ ways to chose $N$.
Next, let $B$ be a balanced bipartite graph defined as follows: 
the parts of $B$ are $E_{u_0} \cap E(G_1)$
and $E_u \cap E(G_1)$ and there is an edge between $u_0 x \in E_{u_0} \cap E(G_1)$ and $uy \in E_{u} \cap E(G_1)$ if
\begin{itemize}
\item $u_0x \notin N$, or
\item $u_0x \in N$ and $h(u_0x) \in L(uy)$.
\end{itemize}
If $u_0x \notin N$, then the degree of $u_0x$ in $B$ is $n-1$.
If $u_0x \in N$, then the degree of $u_0x$ in $B$ is at most $\beta'n$ because the color $h(u_0x)$
occurs at most $\beta'n$ times in $E_u \cap E(G_1)$.
A perfect matching in $B$ corresponds to a choice of $\rho_1$ so that $u_0$ is mapped to $u$ and
all edges in $N$ are mapped to conflict edges under $h'$. By Theorem 
\ref{balancedbipartite},
the number of perfect matchings in $B$ is at most
$$\big((\beta'n)!\big) ^{\frac{c'(n)}{\beta'n}} \big((n-1)!\big) ^{\frac{n-1-c'(n)}{n-1}}.$$

So the probability that $E_u \cap E(G_1)$ contains at
least $c'(n)$ conflicts with $L$ is at most
$$X=\dfrac{n! n {{n-1} \choose c'(n)} \big((\beta'n)!\big) ^{\frac{c'(n)}{\beta'n}} 
\big((n-1)!\big) ^{\frac{n-1-c'(n)}{n-1}}}{(n!)^2}$$
$$=\dfrac{\big((\beta'n)!\big) ^{\frac{c'(n)}{\beta'n}} \big((n-1)!\big) ^{\frac{n-1-c'(n)}{n-1}}}{c'(n)! (n-1-c'(n))!}$$
Using Stirling's formula
$$n! =C_0n^{a_0} \Big(\frac{n}{e}\Big)^n$$ 
for some positive constants $C_0$ and $a_0$; we have:
$$X \leq Cn^a \Big( \dfrac{\beta'n}{c'(n)}\Big)^{c'(n)} 
\Big(\dfrac{n-1}{n-1-c'(n)}\Big)^{(n-1-c'(n))}$$
$$=Cn^a \Big( \dfrac{\beta'n}{c'(n)}\Big)^{c'(n)} \Big(1+\dfrac{1}{(n-1-c'(n))/c'(n)}\Big)^{(n-1-c'(n))}$$
where $C$ and $a$ are some positive constants.

Since the function $f(x)=(1+\dfrac{1}{x})^{xy}$ is increasing for $x, y\geq 1$, we have
$$X<Cn^a \Big( \dfrac{\beta'n}{c'(n)}\Big)^{c'(n)} \Big(1+\dfrac{1}{(n-c'(n))/c'(n)}\Big)^{(n-c'(n))}$$
$$=Cn^a \Big( \dfrac{\beta'n}{c'(n)}\Big)^{c'(n)} \Big(\dfrac{n}{n-c'(n)}\Big)^{n-c'(n)}$$
$$=Cn^a \Big( \dfrac{\beta'(n-c'(n))}{c'(n)}\Big)^{c'(n)} \Big(\dfrac{n}{n-c'(n)}\Big)^{n}$$

Now, since $G_1$ has $n$ vertices, the probability that $h'$ 
fails condition (f) for $G_1$ is at most
$$Y=Cn^{a+1} \Big( \dfrac{\beta'(n-c'(n))}{c'(n)}\Big)^{c'(n)} \Big(\dfrac{n}{n-c'(n)}\Big)^{n}$$

Since  $\beta'<\dfrac{c'(n)}{(n-c'(n))} 
\Big(\dfrac{n - c'(n)}{n}\Big)^{\frac{n}{c'(n)}}$,
 we have
$Y \rightarrow 0$ as  $n \rightarrow \infty$; thus the probability that $h'$ fails condition (f) for $G_1$
tends to zero as $n \rightarrow \infty$. 
That the probability that $h'$ fails condition (f) for $G_2$
tends to zero as $n \rightarrow \infty$ can be proved similarly.

\item Next, we prove (g) for $G_1$. Let $n_0=(n-1)/2$ if $n$ is odd and $n_0=n/2$ if $n$ is even.
The total number of edges in $G_1$ is $n(n-1)/2$. Since the total number of colors used to color
the edges of $G_1$ under $h$ is $n$ if $n$ is odd, and $n-1$ if $n$ is even, 
there are exactly $n_0$ edges in $G_1$ that 
are colored by a fixed color in $\{n+1, n+2, \dots, m\}$.

Let $c$ be a color in $\{n+1, n+2, \dots, m\}$ and 
$P_c=\{x_1y_1, x_2y_2,\dots,x_{n_0}y_{n_0}\}$ be the set of edges
that are colored $c$ under $h$. We estimate the number of 
choices for the
pair $\rho=(\rho_1,\rho_2)$ such that under $h'$
at least $c(n)$ edges in $G_1$ that are colored $c$ 
are conflicts with $L$.
There are $n!$ ways of choosing the permutation $\rho_2$; 
fix such a permutation $\rho_2$. 

\begin{itemize}

\item If $n$ is odd, there is only one vertex $u$ in $G_1$ that is 
not contained in the set
$V'_c=\{x_1,x_2,\dots,x_{n_0}, y_1,y_2,\dots,y_{n_0}\}$; there are $n$ ways of
choosing a vertex $u_1$ 
such that $\rho_1(u)=u_1$, we fix such a vertex $u_1$. 
Moreover, there
are $(n-1)\dots(n-n_0)$ ways of choosing
a set of vertices $\{x'_1,\dots,x'_{n_0}\}$ satisfying 
that $\rho_1(x_i)=x'_i$, $i=1,\dots,n_0$; fix such a set
$\{x'_1,\dots,x'_{n_0}\}$ and let 
$Q=V(G_1)  \setminus \{x'_1,\dots,x'_{n_0}, u_1\}$. 
Note that $n \big((n-1) \dots (n-n_0)\big) =n!/n_0!$ (since $n_0=(n-1)/2$ if $n$ is odd).

\item If $n$ is even, then 
$V(G_1) = V'_c$, where the latter set is defined as above.
There are $n(n-1)\dots(n-n_0+1)$ ways of choosing
a set of vertices $\{x'_1,\dots,x'_{n_0}\}$ 
such that $\rho_1(x_i)=x'_i$, $i=1,\dots,n_0$; fix such a set
$\{x'_1,\dots,x'_{n_0}\}$ and let $Q=V(G_1)  \setminus \{x'_1,\dots,x'_{n_0}\}$. 
Note that $n(n-1)\dots(n-n_0+1) =n!/n_0!$.
\end{itemize}

Let $N$ be a subset of $P_c$ such that $|N|=c(n)$
and all edges in $N$ are mapped to conflict edges by $\rho$; there are 
$n_0 \choose c(n)$ ways to chose $N$.
We now define a balanced bipartite graph $B$ as follows: the parts of $B$ are $P_c$
and $Q$ and there is an edge between $x_iy_i \in P_c$ and $y'_j \in Q$ if
\begin{itemize}
\item $x_iy_i \notin N$, or
\item $x_iy_i \in N$ and $h(x_iy_i)=c \in L(x'_iy'_j)$.
\end{itemize}
If $x_iy_i \notin N$, then the degree of $x_iy_i$ in $B$ is $n_0$.
If $x_iy_i \in N$, then the degree of $x_iy_i$ in $B$ is at most $\beta' n$ because the color $c$
occurs at most $\beta' n$ times in $E_{x'_i} \cap E(G_1)$.
A perfect matching in $B$ corresponds to a choice 
of $\rho_1$ so that $\rho_1(x_i)=x'_i$, $i=1,\dots,n_0$ and
all edges in $N$ are mapped to conflict edges in $h'$. By Theorem 
\ref{balancedbipartite},
the number of perfect matchings in $B$ is at most
$$\big((\beta' n)!\big) ^{\frac{c(n)}{\beta' n}} \big(n_0 !\big) ^{\frac{n_0-c(n)}{n_0}}.$$

So the probability that $P_c$ contains at least $c'(n)$ conflicts with $L$ is at most
$$X=\dfrac{n! (n!/n_0!) {n_0 \choose c(n)} \big((\beta' n)!\big) ^{\frac{c(n)}{\beta' n}} \big(n_0!\big) ^{\frac{n_0-c(n)}{n_0}}}{(n!)^2}$$
$$=\dfrac{\big((\beta' n)!\big) ^{\frac{c(n)}{\beta'n}} 
\big(n_0!\big) ^{\frac{n_0-c(n)}{n_0}}}{c(n)! (n_0-c(n))!}$$
Using Stirling's formula and similar estimates as above we deduce
that
$$X \leq Cn^a \Big( \dfrac{\beta'n}{c(n)}\Big)^{c(n)} \Big(\dfrac{n_0}{n_0-c(n)}\Big)^{n_0-c(n)}$$
$$<Cn^a \Big( \dfrac{\beta'n}{c(n)}\Big)^{c(n)} \Big(\dfrac{n}{n-c(n)}\Big)^{n-c(n)}
=Cn^a \Big( \dfrac{\beta'(n-c(n))}{c(n)}\Big)^{c(n)} \Big(\dfrac{n}{n-c(n)}\Big)^{n}$$
where $C$ and $a$ are some positive constants.

Note that there are at most $n$ colors in $h_{G_1}$, thus the probability that $h'$ fails condition (g) for $G_1$ is at most
$$Y=Cn^{a+1} \Big( \dfrac{\beta'(n-c(n))}{c(n)}\Big)^{c(n)} \Big(\dfrac{n}{n-c(n)}\Big)^{n}.$$

Since  $\beta'<\dfrac{c(n)}{(n-c(n))} \Big(\dfrac{n - c(n))}{n}\Big)^{\frac{n}{c(n)}}$, we have
$Y \rightarrow 0$ as  $n \rightarrow \infty$; thus the probability that $h'$ fails condition (g) for $G_1$
tends to zero as $n \rightarrow \infty$. 
That the probability that $h'$ fails condition (g) for $G_2$
tends to zero as $n \rightarrow \infty$ can be proved similarly.

\item The proof of (h) is almost identical to the proof of (g) except that
we use the property that at most $\alpha' n$ edges incident to any vertex are prescribed. We omit the details.

\item The proof of (i) is also almost identical to the proof of (g);
here one also has to use the
property that any fixed color $c_1$ appears in the lists of at most of $\beta' n$ edges incident to any given vertex;
additionally instead of having $n$ choices for a 
color $c$ as in the proof of $(g)$, we will have $m(m-n)$ choices for a
pair of colors $(c_1,c_2)$. Here, as well, we omit the details.
\end{itemize}
\end{proof}

Lemma \ref{alpha} implies that there exists a pair of permutations 
$\rho=(\rho_1,\rho_2)$ such that 
if $h'$ is the proper $m$-edge coloring obtained from $h$ by applying $\rho$ to $h$ then $h'$
satisfies conditions (a)-(i) of Lemma \ref{alpha}.
\bigskip

\noindent
{\bf Step III:} Let $h'$ be the proper $m$-edge coloring satisfying conditions
(a)-(i) of Lemma \ref{alpha} obtained in the previous step.

We use the following lemma for extending $\varphi$ to a 
proper $m$-edge precoloring $\varphi'$ of $K_{2n}$, such that an
edge $e$ of $K_{2n}$ is colored under $\varphi'$ if and only if
$e$ is precolored under $\varphi$ or $e$ is a conflict edge of
$h'$ with $L$.

\begin{lemma}
	\label{gamma}
        Let $\alpha, \beta$ be constants and $c(n),f(n)$ be functions of $n$ such that
         $$m - \beta m -2 \alpha m -  2c(n) - \dfrac{2nc(n)}{f(n)} \geq 1.$$ 
        There is a proper $m$-edge precoloring $\varphi'$ of $K_{2n}$ satisfying the following:
        
        \begin{itemize}
        \item[(a)] $\varphi'(uv)=\varphi(uv)$ for any edge  $uv$ of $K_{2n}$ that is precolored under $\varphi$.
        
        \item[(b)] For every conflict edge $uv$ of $h'$ that is not colored 
				under $\varphi$, 
        $uv$ is colored under $\varphi'$ and $\varphi'(uv) \notin L(uv)$.
        
	\item[(c)] There are at most $\alpha m + c(n)$ prescribed edges at each vertex of $K_{2n}$ under $\varphi'$.
				
	\item[(d)] There are at most $\alpha m +f(n)$ prescribed edges with color $i$, $i =1,\dots, m$, under $\varphi'$.
	\end{itemize}
	Furthermore, the edge coloring $h'$ of $K_{2n}$  and the precoloring 
	$\varphi'$ of $K_{2n}$ satisfy that 
	\begin{itemize}
	\item[(e)] For each color $c \in \{1,2,\dots,n\}$, there are at most $2c(n)$ prescribed edges in $K_{n,n}$ with color $c$ under $h'$.
	
	\item[(f)] For each color $c \in \{n+1,n+2,\dots,m\}$, 
	there are at most $2c(n)$ prescribed edges in $G_1$ $(G_2)$ with color $c$ 
	under $h'$.
	\end{itemize}
	\end{lemma}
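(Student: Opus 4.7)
Parts (e) and (f) will come essentially for free: an edge prescribed under $\varphi'$ is either precolored under $\varphi$ or a conflict edge of $h'$ with respect to $L$. So, for any color $c \in \{1,\dots,n\}$, the edges in $K_{n,n}$ of $h'$-color $c$ that are prescribed under $\varphi'$ are bounded by $c(n) + c(n) = 2c(n)$ via Lemma~\ref{alpha}(d) for the $\varphi$-contribution and Lemma~\ref{alpha}(c) for the conflict-contribution; the analogous counting in $G_1$ and $G_2$ using Lemma~\ref{alpha}(h) and (g) gives (f).

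For (a)--(d), the plan is to construct $\varphi'$ greedily. Starting from $\varphi$, I would process the conflict edges of $h'$ that are not already in the domain of $\varphi$ in an arbitrary fixed order, assigning each of them a color not in its list while preserving properness. Throughout this process I would maintain as invariants the bounds of (c) and (d): at each vertex at most $\alpha m + c(n)$ edges are prescribed, and each color is used at most $\alpha m + f(n)$ times. Both invariants hold trivially at the start because $\varphi$ is $\alpha$-dense.

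When it is time to color a conflict edge $uv$, the colors that must be avoided fall into three groups. First, the at most $\beta m$ colors in $L(uv)$. Second, the colors currently appearing at $u$ or $v$ under $\varphi'$, of which there are at most $2(\alpha m + c(n))$ by the invariant. Third, any ``saturated'' color that has already been assigned $\alpha m + f(n)$ times under the current $\varphi'$. The remark following Lemma~\ref{alpha} guarantees that every vertex of $K_{2n}$ meets at most $c(n)$ conflict edges of $h'$, so the total number of conflict edges in $K_{2n}$ is at most $nc(n)$; since $\varphi$ contributes at most $\alpha m$ uses of any single color, a color can only become saturated after being assigned at least $f(n)$ times by the greedy step, so the number of saturated colors at any moment is at most $nc(n)/f(n) \le 2nc(n)/f(n)$. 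Combining the three bounds, the number of forbidden colors is at most
\[
\beta m + 2\alpha m + 2c(n) + \tfrac{2nc(n)}{f(n)},
\]
so the hypothesis of the lemma yields at least one admissible color. Assigning it to $uv$ preserves properness and both invariants, and ensures $\varphi'(uv) \notin L(uv)$ as required by (b); (a) holds because precolored edges are never disturbed.

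The only non-trivial step is the global count of saturated colors; this is exactly where the sparsity of $h'$ produced in Step~II (more precisely, properties (a')--(c') of the remark after Lemma~\ref{alpha}) is exploited, and once those bounds are in hand the greedy argument is routine.
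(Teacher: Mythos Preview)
Your proposal is correct and follows essentially the same route as the paper: a greedy extension of $\varphi$ over the (uncolored) conflict edges, avoiding list colors, colors already present at the endpoints, and ``overloaded'' colors, with the overloaded-color count bounded via the total number of conflict edges coming from Lemma~\ref{alpha}. Your derivation of (e) and (f) from parts (c), (d), (g), (h) of Lemma~\ref{alpha} is exactly what the paper does as well.
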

	
	Note that the two conditions (e) and (f) imply that
	\begin{itemize}
	\item[(g)] For each color $c \in \{1,2,\dots,m\}$, 
	there are at most $2c(n)$ prescribed edges in $K_{2n}$ with color $c$ in $h'$.
	\end{itemize}

	\begin{proof}
	We shall construct the coloring $\varphi'$ by assigning a color
	to every conflict edge; this is done by iteratively constructing
	an $m$-edge precoloring $\phi$ of the conflict edges of $K_{2n}$;
	in each step we color a hitherto uncolored conflict edge, thereby
	transforming a conflict edge to a prescribed edge.
		
	A color $c$ is {\em $\phi$-overloaded} in $K_{2n}$ if $c$ appears on at least 
	$f(n)$ edges in $K_{2n}$ under $\phi$. Since each vertex of $K_{2n}$ is incident 
	with at most $c(n)$ conflict edges, the number of conflict edges in $K_{2n}$ is 
	at most $2nc(n)$; this implies that at most 
	$\dfrac{2nc(n)}{f(n)}$ colors are $\phi$-overloaded in $K_{2n}$.
	
	Let $G$ be the subgraph of $K_{2n}$ induced by all conflict edges of $K_{2n}$.
	Let us now construct the $m$-edge coloring $\phi$ of $G$.
	We color the edges of $G$ by steps, and in each step we define a 
	list $\mathcal{L}(e)$ of allowed colors for 
	a hitherto uncolored edge $e =uv$ of $G$ by for 
	every color $c \in \{1,\dots, m\}$ 	including $c$ in $\mathcal{L}(e)$ if 
	\begin{itemize}
	\item $c \notin L(uv)$,
	\item $c$ does not appear in $\varphi(u)$ or $\varphi(v)$,
	or on any previously colored edge of $G$ that is adjacent to $e$,
	\item $c$ is not $\phi$-overloaded in $K_{2n}$.
	\end{itemize}
	
	Our goal is then to pick a color $\phi(e)$ from $\mathcal{L}(e)$ for $e$. 
	Given that this is possible for each edge of $G$, 
	this procedure clearly produces a $m$-edge-coloring $\phi$ of $G$, so that
	$\phi$ and $\varphi$ taken together form a proper $m$-edge precoloring of $K_{2n}$.
	Using the estimates above and the facts that $G$ has maximum degree $c(n)$, 
	and at most $\alpha m$ edges incident with any vertex $v$ of $K_{2m}$
	are prescribed with respect to $\varphi$,
	we have
	$$\mathcal{L}(e) \geq m - \beta m -2 \alpha m -  2c(n) - \dfrac{2nc(n)}{f(n)}$$
	for every edge $e$ of $G$ in the process of constructing $\phi$, and 
	by assumption $\mathcal{L}(e) \geq 1$.
	Thus, we conclude that we can choose an allowed color for each conflict edge
	so that the coloring $\phi$ satisfies the above conditions.
	This implies that taking $\phi$ and $\varphi$ together we obtain
	a proper $m$-precoloring $\varphi'$ of the edges of $K_{2n}$.
	There are at most $\alpha m + c(n)$ prescribed edges at each vertex 
	of $K_{2n}$ under $\varphi'$
	because the maximum degree of $G$ is $c(n)$. The fact that we do not use 
	$\phi$-overloaded 
	colors in $\phi$ implies that there are at most $\alpha m +f(n)$ 
	prescribed edges with color $i$, 
	$i =1,\dots, m$, under $\varphi'$. 
	
	Let us next prove that the precoloring $\varphi'$ satisfies condition
	(e). By the previous lemma, for each color $c \in \{1,2,\dots,n\}$, 
	there are at most $c(n)$ edges in $K_{n,n}$ 
	that are colored $c$ that are prescribed under $\varphi$. Furthermore, 
	we have at most $c(n)$ edges in $K_{n,n}$ 
	that are colored $c$ that are conflict; thus after transforming all 
	conflict edges to prescribed edges, 
	there are at most $2c(n)$ prescribed edges in $K_{n,n}$ with respect to 
	$\varphi'$ that are colored $c$ under $h'$. 
	A similar argument shows that condition (f) holds as well. 
	\end{proof}

	\bigskip

	\noindent
	{\bf Step IV:} Let $h'$ be the $m$-edge coloring of
	$K_{2n}$ obtained in Step II, and
	suppose that $\hat h$
	is a proper $m$-edge coloring of $K_{2n}$ 
	obtained from $h'$ by performing a sequence of swaps. 
	We say that an edge $e$ in $K_{2n}$ is
	{\em disturbed (in $\hat{h}$)} if $e$ appears in a swap
	which is used for obtaining $\hat{h}$ from $h'$, or if $e$ is one of
	the original at most $3n+7$ edges in $h'$ that do not belong 
	to at least $\left \lfloor{\frac{n}{2}}\right \rfloor - \epsilon n$ 
	allowed strong $2$-colored 4-cycles in $h'$. 
	For a constant $d >0$, we say that a vertex $v$ or color $c$ is $d$-overloaded
	if at least $d n$ 
	edges which are incident to $v$ or colored $c$, respectively,  are disturbed.
	
	The following lemma is similar to Lemma $3.5$ and Lemma $3.6$ in 
	\cite{AndrenCasselgrenMarkstrom},
	which are strengthened variants of Lemma 2.2. in \cite{Bartlett}. Thus, we shall skip the proof.
	
	\begin{lemma}
	\label{swapedgeinK}
	Suppose that $h''$ is a proper $m$-edge coloring of $K_{2n}$ obtained from $h'$
	by performing some sequence of swaps
	on $h'$ and that at most $kn^2$ edges in $h''$ are disturbed 
	for some constant $k>0$. Suppose that
	for each color $c$, at most $2c(n)+P(n)$ edges 
	with color $c$ under $h''$ are prescribed.
	Moreover, let $\{t_1,\dots,t_a\}$ be a set of colors from $h''$.
	If $$\left \lfloor{\frac{n}{2}}\right \rfloor - 2\epsilon n - 
	6d n - 5 \dfrac{k}{d}n - 4\alpha m - 8c(n) - 3a - 3\beta m - 2P(n)- 6 >0$$
	then for any vertex $u_1$ of $G_1$ $(G_2)$ and all but at most
	\begin{itemize}
	
	\item $2 \dfrac{k}{d} n + \alpha m +c(n)+a$ choices of a vertex $u_2$ in
	$G_2$ $(G_1)$, 
	such that $h''(u_1u_2) \in \{1,2,\dots,n\}$, and 
 	
	\item $4 \dfrac{k}{d} n+ a+1+4c(n)+2\beta m + 2 \alpha m +2dn+P(n)$ choices of 
	a vertex $v_2$ in $G_2$ $(G_1)$, such that $h''(u_1v_2) \in \{1,2,\dots,n\}$,
	
	\end{itemize}
	there is a subgraph $T$ of $K_{n,n}$ 
	and a proper $m$-edge coloring $h^T$ of $K_{2n}$, obtained from
	$h''$ by performing a sequence of swaps on $4$-cycles in
	$T$, that satisfies the following:
	\begin{itemize}
	\item the color of any edge of $T$ under $h''$ is not $d$-overloaded;
 	\item no edges that are prescribed (with respect to $\varphi'$) are in $T$;
	\item $h''$ and $h^T$ differs on at most $16$ edges 
	\emph{(}i.e. $T$ contains at most $16$ edges\emph{)};
	\item no edge with a color in $\{t_1,...,t_a\}$ under $h''$ is in $T$;
	\item $h^T(u_1u_2)=h''(u_1v_2)$ and  $h^T(u_1v_2)=h''(u_1u_2)$;
	\item if there is a conflict of $h^T$ with respect to $L$, then this edge is also
	a conflict of $h''$;
	\item any edge in $G_1$ or $G_2$ that is requested under $h^T$ (with respect
	to $\varphi'$) is also
	requested under $h''$.
	\end{itemize}
	\end{lemma}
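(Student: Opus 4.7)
The plan is to adapt the strategy of Lemma~2.2 in \cite{Bartlett} and Lemmas~3.5--3.6 in \cite{AndrenCasselgrenMarkstrom} to the complete graph setting, the key additional observation being that since $h''(u_1u_2), h''(u_1v_2) \in \{1,\dots,n\}$, it is natural to look for $T$ among strong $2$-colored $4$-cycles through the edges at $u_1$, all of which are automatically contained in $K_{n,n}$. This automatically guarantees that $T \subseteq K_{n,n}$, and hence that the edges of $G_1, G_2$ are untouched, so the last condition of the lemma (no new requested edges in $G_1, G_2$) becomes essentially automatic: a swap on a $4$-cycle preserves the multiset of colors at every vertex, so for $e \in E(G_1) \cup E(G_2)$ the pair $(h^T(e), h^T(\text{neighbors of } e))$ is identical to $(h''(e), h''(\text{neighbors of } e))$.

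First I would enumerate the forbidden $u_2$. A vertex $u_2 \in G_2$ with $h''(u_1u_2) \in \{1,\dots,n\}$ must be discarded if (i)~the color $h''(u_1u_2)$ is $d$-overloaded, (ii)~$u_1$ is $d$-overloaded at $u_1u_2$ itself, (iii)~$u_1u_2$ is prescribed under $\varphi'$, or (iv)~$h''(u_1u_2) \in \{t_1,\dots,t_a\}$. Since at most $kn^2$ edges are disturbed, at most $(k/d)n$ colors are $d$-overloaded and at most $(k/d)n$ vertices are $d$-overloaded; each contributes at most one edge at $u_1$. Combined with the $\alpha m + c(n)$ prescribed edges at $u_1$ supplied by Lemma~\ref{gamma}(c) and the $a$ excluded colors, the total is at most $2(k/d)n + \alpha m + c(n) + a$, matching the first bullet.

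For a good $u_2$, I would use condition (a) of Lemma~\ref{alpha} to select from the at least $\lfloor n/2\rfloor - \epsilon n$ allowed strong $2$-colored $4$-cycles through $u_1u_2$: each such cycle $u_1u_2wv_2u_1$ designates a candidate $v_2$ together with a single-swap $T$. I would then discard those cycles whose swap fails the listed conditions: some non-$u_1u_2$ edge is disturbed or has a $d$-overloaded color (bounded by a constant multiple of $dn + (k/d)n$ via the three remaining edges), some edge is prescribed (bounded using Lemma~\ref{gamma}(c) at $u_1, u_2, v_2, w$ together with Lemma~\ref{gamma}(e) for the fixed color $c_1$, giving terms of order $\alpha m + c(n) + P(n)$), some edge carries a color in $\{t_1,\dots,t_a\}$ ($\le 3a$), and the swap turns a non-conflict into a conflict (bounded by $2\beta m$ through condition (c') on $L$). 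Summing these losses against the $\lfloor n/2\rfloor - \epsilon n$ available cycles matches the second bullet.

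The main obstacle is that some pairs $(u_2,v_2)$ still have every direct $4$-cycle blocked (for instance when the $w$-corner always forces a prescribed or conflict edge); here a single swap does not suffice. To absorb these, I would allow $T$ to consist of up to four $2$-colored $4$-cycles in $K_{n,n}$ whose combined effect still swaps exactly $h''(u_1u_2)$ and $h''(u_1v_2)$, patching obstructions by auxiliary swaps on cycles through other vertices of $G_2$; this keeps $|E(T)| \le 16$. Each auxiliary cycle contributes only a constant multiple of the same error terms, and the hypothesis
$$\left\lfloor \tfrac{n}{2} \right\rfloor - 2\epsilon n - 6dn - 5\tfrac{k}{d}n - 4\alpha m - 8c(n) - 3a - 3\beta m - 2P(n) - 6 > 0$$
is precisely what ensures that the total deficit is strictly less than the stock of strong allowed $4$-cycles, so at least one admissible $T$ survives. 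Verifying the ``no new conflicts'' clause then reduces to the fact that each constituent cycle is an allowed cycle under the current coloring, which is ensured by the counting above.
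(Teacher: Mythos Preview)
Your proposal is correct and takes essentially the same approach as the paper, which in fact omits the proof entirely, noting only that the lemma is similar to Lemma~3.5 and Lemma~3.6 in \cite{AndrenCasselgrenMarkstrom} (strengthened variants of Lemma~2.2 in \cite{Bartlett}). Your key observation---that both colors lie in $\{1,\dots,n\}$, so $T$ may be taken inside $K_{n,n}$, making the bipartite machinery applicable verbatim and the ``no new requested edges in $G_1,G_2$'' clause automatic since edges of $G_1\cup G_2$ retain their $h''$-colors---is exactly the adaptation the paper has in mind.
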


	Lemma \ref{swapedgeinK} states that there are  
	many pairs of adjacent edges $e_x,e_y \in E(K_{n,n})$ satisfying that
	$h''(e_x), h''(e_y) \in \{1,2,\dots,n\}$ such that we can exchange their colors
	by recoloring a small subgraph of $K_{n,n}$.
	When applying the preceding lemma, we shall
	refer to $u_1u_2$ as the ``first edge'' and $u_1v_2$ as the ``second edge''.

	Given an edge $e_x \in E(K_{n,n})$ such that $h''(e_x) \in \{1,2,\dots,n\}$, 
	the following lemma
	is used for obtaining a coloring where an edge $e_y \in E(K_{n,n})$ 
	adjacent to $e_x$ is colored $h''(e_x)$.

	\begin{lemma}
	\label{swapedgeinK1}
	Suppose that $h''$ is a proper $m$-edge coloring of $K_{2n}$ 
	obtained from $h'$ by performing some sequence of swaps
	on $h'$ and that at most $kn^2$ edges in $h''$ are 
	disturbed for some constant $k>0$. Suppose that
	for each color $c$, at most $2c(n)+P(n)$ edges with color 
	$c$ under $h''$ are prescribed,
	and at most $H(n)$ edges with color $c$ are disturbed.
	Moreover, let $\{t_1,\dots,t_a\}$ be a set of colors from $h''$.
	If $$\left \lfloor{\frac{n}{2}}\right \rfloor - 2\epsilon n - 6d n - 
	5 \dfrac{k + 34/n^2}{d}n - 4\alpha m - 8c(n) - 3a - 3\beta m - 2P(n)- 6 >0$$
	and
	$$n -  \Big(8 \dfrac{k + 34/n^2}{d} n + 2a + 3 + 8c(n) + 6\beta m + 
	4 \alpha m + 4dn+ 2P(n) + H(n)\Big)>0$$
	then for any edge $u_1u_2$ of $K_{n,n}$ with
	$$h''(u_1u_2)=c_1, \; c_1 \in  \{1,2,\dots,n\}, \; 
	c_1 \notin  \{t_1,\dots,t_a\}$$ and all but at most
	$$4c(n) + P(n) + 2\beta m + 2\alpha m + 2a + 1 + 4 \dfrac{k+34/n^2}{d} n + H(n)$$ 
	choices of a vertex $v_2$ satisfying that $u_1v_2 \in E(K_{n,n})$, 
	there is a subgraph $T$ of $K_{n,n}$ and a proper $m$-edge coloring 
	$h^T$ of $K_{2n}$, obtained from
	$h''$ by performing a sequence of swaps on $4$-cycles in
	$T$, that satisfies the following:
	\begin{itemize}
	
	\item except $c_1$, any color of an edge in $T$ under $h''$ is not $d$-overloaded;
 	
	\item except $u_1u_2$, no edge in $T$ is prescribed;
	
	
	\item $h''$ and $h^T$ differs on at most $34$ edges \emph{(}i.e. $T$ 
	contains at most $34$ edges\emph{)};
	
	\item no edge with a color in $\{t_1,\dots,t_a\}$ under $h''$ is in $T$;
	
	\item $h^T(u_1v_2)=h''(u_1u_2)=c_1$;
	
	\item if there is a conflict of $h^T$ with $L$, then this edge is also
	a conflict of $h''$ with $L$;
	
	\item any edge in $G_1$ or $G_2$ that is requested under $h^T$ (with respect
	to $\varphi'$) is also
	requested under $h''$.
	\end{itemize}
	\end{lemma}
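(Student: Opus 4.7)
The plan is to adapt the argument for Lemma \ref{swapedgeinK} to accommodate two new relaxations that Lemma \ref{swapedgeinK1} permits: the source edge $u_1u_2$ may be prescribed, and its color $c_1$ may be $d$-overloaded. The strategy is to perform the transfer of $c_1$ in two stages, using an auxiliary edge $u_1w$ to shift the color off the problematic edge $u_1u_2$ before transferring it to the target edge $u_1v_2$. The first stage is a direct swap on an allowed $2$-colored $4$-cycle; the second stage is an application of Lemma \ref{swapedgeinK} (or, in favorable cases, a second direct swap).

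First I would enumerate the vertices $v_2$ that have to be excluded from the allowed choices. Besides the categories already appearing in Lemma \ref{swapedgeinK} (those where $u_1v_2$ is prescribed, where $h''(u_1v_2)$ lies in the forbidden set $\{t_1,\dots,t_a\}$ or outside $\{1,\dots,n\}$, where the color of $u_1v_2$ is $d$-overloaded, or where $u_1v_2$ itself carries a list entry that would force a conflict), there are new categories specific to the present setting: vertices with $h''(u_1v_2)=c_1$ (contributing the $+1$), vertices where $u_1v_2$ is disturbed (contributing the $H(n)$ term, via the hypothesis that at most $H(n)$ edges of any color are disturbed), and vertices for which no suitable intermediate $w$ can be chosen. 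The factor $4(k+34/n^2)/d\cdot n$ arises because, after the two-stage construction, the total disturbance is at most $kn^2+34$, and the disturbed-color bookkeeping must be reapplied at this updated level.

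For a good $v_2$, set $c_2=h''(u_1v_2)$. I would choose an auxiliary vertex $w$ in the partite class of $v_2$ (distinct from $u_2$ and $v_2$) together with a vertex $x$ in the partite class of $u_1$ (distinct from $u_1$), so that the $4$-cycle $u_1u_2xwu_1$ is a $2$-colored allowed cycle under $h''$ with colors $c_1$ and some $c_3\in\{1,\dots,n\}\setminus\{c_1,c_2,t_1,\dots,t_a\}$ that is not $d$-overloaded, with none of $u_2x$, $xw$, $u_1w$ prescribed. A direct swap on this cycle, costing $4$ edges, produces a coloring $h^{T_1}$ in which $u_1w$ has color $c_1$ and $u_1u_2$ has color $c_3$. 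I would then invoke Lemma \ref{swapedgeinK} applied to $h^{T_1}$, with first edge $u_1w$ and second edge $u_1v_2$ and the forbidden color set extended by $c_3$, to obtain $h^T$ with $h^T(u_1v_2)=c_1$. This second application contributes at most $16$ edges, leaving the totality well within the $34$-edge budget allowed in the conclusion.

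The main obstacle is that Lemma \ref{swapedgeinK} rules out $d$-overloaded colors on edges of its output subgraph, whereas $c_1$ may well be $d$-overloaded in the setting of Lemma \ref{swapedgeinK1}. This is circumvented in one of two ways: either the second-stage transfer can be realised as a further direct swap on a $2$-colored allowed $4$-cycle through $u_1w$ and $u_1v_2$ (the abundance of strong $2$-colored $4$-cycles in $h'$, largely inherited by $h''$, provides enough choices for most good $v_2$), or the additional $H(n)$-bound on per-color disturbance allows the argument underlying Lemma \ref{swapedgeinK} to be replayed with the weaker assumption that $c_1$ is only ``slightly'' overloaded. Verifying that no new conflicts or requests are produced, that the prescribed-edge count is preserved (with $u_1u_2$ being the sole potentially prescribed edge of $T$), and that the disturbed-edge budget remains respected is then a routine but careful bookkeeping exercise, entirely analogous to the proof of the corresponding statement in \cite{AndrenCasselgrenMarkstrom}.
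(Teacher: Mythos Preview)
Your two-stage plan does not match the paper's proof, and both stages have real problems.

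For the first stage you want a $2$-colored allowed $4$-cycle $u_1u_2xwu_1$ through the given edge $u_1u_2$. But the statement is for an \emph{arbitrary} edge $u_1u_2$ with $h''(u_1u_2)=c_1$: it may be one of the $3n+7$ exceptional edges, or it may be disturbed (indeed, it may be prescribed, so in later applications it typically \emph{is} disturbed). For such an edge there is no guarantee that any $2$-colored $4$-cycle through it survives in $h''$, so your ``direct swap'' need not exist. For the second stage you want to apply Lemma \ref{swapedgeinK} with first edge $u_1w$ of color $c_1$; but that lemma requires every color appearing in $T$ to be non-$d$-overloaded, and $c_1$ may well be $d$-overloaded. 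Your two suggested workarounds (a further direct swap, or ``replaying'' the proof of Lemma \ref{swapedgeinK} under an $H(n)$ bound) are not arguments: a direct swap through the two specific edges $u_1w$ and $u_1v_2$ requires a particular fourth vertex that need not exist, and reopening a black-box lemma is exactly what one should avoid here.

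The paper's proof sidesteps both issues with a different construction. Given $v_2$, let $v_1$ be the vertex with $h''(v_1v_2)=c_1$; choose $v_2$ so that $v_1v_2$ is undisturbed and unprescribed and so that $u_1v_2$ and $u_2v_1$ are valid \emph{first} edges for Lemma \ref{swapedgeinK}. Then pick a single good color $c_2\in\{1,\dots,n\}$, not $d$-overloaded, with $c_2\notin L(u_1u_2)\cup L(v_1v_2)$, such that the $c_2$-edges at $u_1$ and at $u_2$ are valid \emph{second} edges for Lemma \ref{swapedgeinK}. Two applications of Lemma \ref{swapedgeinK} (each touching only non-overloaded colors and no prescribed edges) recolor $u_1v_2$ and $u_2v_1$ to $c_2$, and a final swap on the manufactured $2$-colored $4$-cycle $u_1u_2v_1v_2u_1$ puts $c_1$ on $u_1v_2$. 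The point is that $u_1u_2$ and the color $c_1$ enter only in this last explicit $4$-cycle swap, never inside an invocation of Lemma \ref{swapedgeinK}; this is what yields the edge count $2+2\cdot 16=34$ and the exclusion bound in the statement.
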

	
	\begin{proof}
	Without loss of generality, assume that $u_1 \in V(G_1)$; 
	this implies $u_2 \in V(G_2)$. 
		We choose $v_2 \in V(G_2)$ so that the following properties hold.
	
	\begin{itemize}
	\item The edge $v_1v_2$ in $K_{n,n}$ satisfying $h''(v_1v_2)=c_1$ is not 
	disturbed and not prescribed.
	Since there are at most $2c(n)+P(n)$ prescribed edges 
	and at most $H(n)$ disturbed edges with color $c_1$ under $h''$,
	and each such prescribed or disturbed edge of $K_{n,n}$ can be incident to at most 
	one vertex of $G_2$,
	this eliminates at most $2c(n)+P(n)+H(n)$ choices.
	\item The edge $u_1v_2$ and the edge $u_2v_1$ 
	are both valid choices for the first edge in an application of 
	Lemma \ref{swapedgeinK}. 
	This eliminates at most 
	$$2\Big(2 \dfrac{k+34/n^2}{d} n + \alpha m +c(n)+a\Big)$$ choices. 
	The additive factor $34/n^2$ comes from the fact that performing
	a sequence of swaps to transform $h''$ into $h^T$
	will create at most $34$ additional disturbed edges.
	\item $c_1 \notin L(u_1v_2) \cup L(u_2v_1)$ and $u_1u_2 \neq v_1v_2$. 
	This excludes at most $2\beta m + 1$ choices.
	\end{itemize}
	
	Thus we have at least
	$$n - 4c(n)- P(n) - 2\beta m - 2\alpha m - 2a -1 - 4 \dfrac{k+34/n^2}{d} n - H(n)$$	
	choices for a vertex $v_2$ and an edge $v_1v_2$. 
	We note that this expression is greater than zero by assumption, 
	so we can indeed make the choice.
	
	Next, we want to choose a color 
	$c_2 \in \{1,2,\dots,n\}$ such that the following properties hold.
	\begin{itemize}
	\item The edges $e_1$ and $e_2$ colored $c_2$ under $h''$
	that are incident with $u_1$ and $u_2$, respectively,
	%
	are both valid choices for the second edge in 
	an application of Lemma \ref{swapedgeinK}; this eliminates at most 
	$$2\Big(4 \dfrac{k+34/n^2}{d} n+ a+1+4c(n)+2\beta m + 2 \alpha m +2dn+P(n)\Big)$$
	choices. 
	Note that this condition implies that color $c_2$ is not $d$-overloaded.
	\item $c_2 \neq c_1$ and $c_2 \notin L(u_1u_2) \cup L(v_1v_2)$. 
	This excludes at most $2\beta m+1$ choices.

	\end{itemize}
	Thus we have at least
	$$n -  \Big(8 \dfrac{k + 34/n^2}{d} n + 2a + 3 + 8c(n) + 
	6\beta m + 4 \alpha m + 4dn+ 2P(n)\Big)$$
	choices. By assumption, this expression is greater than zero, 
	so we can indeed choose such color $c_2$. Now, since 
	$$\left \lfloor{\frac{n}{2}}\right \rfloor - 2\epsilon n - 6d n - 
	5 \dfrac{k+34/n^2}{d}n - 4\alpha m - 8c(n) - 3a - 3\beta m - 2P(n)- 6 >0,$$
	we can apply Lemma \ref{swapedgeinK} two consecutive times 
	to exchange the color of $u_1v_2$ and $e_1$, and
	similarly for $u_2v_1$ and $e_2$. Finally, 
	by swapping on the $2$-colored $4$-cycle
	$u_1u_2v_1v_2u_1$, we get the proper coloring $h^T$ 
	such that $h^T(u_1v_2)=h''(u_1u_2)=c_1$. Moreover,
	since these swaps only involve edges
	from $K_{n,n}$, they do not result in any ``new'' requested edges in 
	$G_1$ or $G_2$. Note that the same holds for conflict edges in $K_{2n}$.
	
	Note that the subgraph $T$,
	consisting of all edges used in the
	swaps above,
	contains two edges $u_1u_2$ and $v_1v_2$ 
	and the additional edges needed for two applications
	of Lemma \ref{swapedgeinK}; this implies that $T$ contains at
	most $2+16 \times 2 =34$ edges. Furthermore,
	except (possibly) $u_1u_2$, no edges in $T$ are prescribed; except $c_1$, 
	$T$ only contains edges with colors that are not $d$-overloaded.
	Additionally, $T$ does not contain an edge with a color in $\{t_1,\dots,t_a\}$.
	\end{proof}
	
	As for Lemma \ref{swapedgeinK}, when applying Lemma \ref{swapedgeinK1},
	we shall refer to $u_1u_2$ as the ``first edge'' and $u_1v_2$ as
	the ``second edge''.

	We use Lemma \ref{swapedgeinG} below for transforming a
	coloring $h''$ into a coloring where 
	an edge $e_y \in E(K_{n,n})$ is colored by the color $h''(e_x)$
	of an adjacent edge $e_x \in E(G_1)$ $(E(G_2))$, where
	$h''(e_x) \in \{n+1,\dots,m\}$.
	In applications of this lemma $u_1v_1$ will be referred to as the
	``first edge'', and $u_1u_2$ as the ``second edge''.
	
	\begin{lemma}
	\label{swapedgeinG}
	Suppose that $h''$ is a proper $m$-edge coloring of $K_{2n}$
	obtained from $h'$ by performing some sequence of swaps
	on $h'$ and that at most $kn^2$ edges in $h''$ are disturbed
	for some constant $k>0$. Suppose further that
	for each color $c$, at most $2c(n)+P(n)$
	edges with color $c$ under $h''$ are prescribed,
	and at most $H(n)$ edges with color $c$ are disturbed.
	Moreover, let $\{t_1,\dots,t_a\}$ be a set of colors from $h''$.
	If $$\left \lfloor{\frac{n}{2}}\right \rfloor - 
	2\epsilon n - 6d n - 5 \dfrac{k+34/n^2}{d}n - 4\alpha m - 
	8c(n) - 3a - 3\beta m - 2P(n)- 6 >0$$
	and
	$$n -  (8 \dfrac{k+34/n^2}{d} n+ 2a+2+12c(n)+6\beta m + 
	8 \alpha m + 4dn + 2P(n) + 2H(n)\Big)>0$$
	then for any edge $u_1v_1$ of $G_1$ $(G_2)$
	with 
	$$h''(u_1v_1)=c_1, \; c_1 \in  \{n+1,\dots,m\}, \; 
	c_1 \notin  \{t_1,\dots,t_a\}$$ and all but at most
	$$6c(n) + 2P(n) + 2\beta m + 2\alpha m + 2a + 1 + 4 \dfrac{k+34/n^2}{d} n + 2H(n)$$ 
	choices of $u_2 \in V(G_2)$ $(V(G_1))$, there is a 
	subgraph $T$ of $K_{2n}$ and a proper $m$-edge coloring
	$h^T$, obtained from $h''$ by performing a sequence of swaps on $4$-cycles in $T$,
	that satisfies the following:
	\begin{itemize}
	
	\item except $c_1$, any color of an edge in $T$ under $h''$
	is not $d$-overloaded;
 	
	\item except $u_1v_1$, no edge in $T$ is prescribed;
	
	\item $h''$ and $h^T$ differs on at most $34$ edges 
	\emph{(}i.e. $T$ contains at most $34$ edges\emph{)};
	
	\item no edge with a color in $\{t_1,\dots,t_a\}$ under $h''$ is in $T$;
	
	\item $h^T(u_1u_2)=h''(u_1v_1)=c_1$;
	
	\item if there is a conflict of $h^T$ with $L$, 
	then this edge is also a conflict of $h''$ with $L$;
	
	\item any edge in $G_1$ or $G_2$ that is requested under $h^T$ (with respect
	to $\varphi'$) is also
	requested under $h''$.
	\end{itemize}
	\end{lemma}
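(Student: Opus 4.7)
The plan is to adapt the proof of Lemma \ref{swapedgeinK1} with the key modification that the final $2$-coloured $4$-cycle is allowed to straddle $K_{n,n}$, $G_1$ and $G_2$ rather than lying entirely inside $K_{n,n}$. Without loss of generality assume $u_1v_1\in E(G_1)$, so I search for $u_2\in V(G_2)$. For each candidate $u_2$, let $v_2$ be the other endpoint of the edge of colour $c_1$ incident to $u_2$ under $h''$; generically this edge is $u_2v_2\in E(G_2)$. The target move is a swap on the $4$-cycle $u_1v_1v_2u_2u_1$ with colour pair $(c_1,c_2)$ for some $c_2\in\{1,\dots,n\}$, preceded by an invocation of Lemma \ref{swapedgeinK1} that prepares the opposite edges $u_1u_2$ and $v_1v_2$ to share the colour $c_2$ via swaps confined to $K_{n,n}$, so that the $G_1$-edge $u_1v_1$ and the $G_2$-edge $u_2v_2$ are untouched and still carry $c_1$.

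I would rule out the following choices of $u_2$: the $c_1$-coloured edge at $u_2$ is disturbed (at most $2H(n)$ vertices, since a disturbed $c_1$-edge in $G_2$ contributes two endpoints); $u_2v_2$ is prescribed under $\varphi'$ (at most $2(2c(n)+P(n))=4c(n)+2P(n)$); $c_1\in L(u_1u_2)\cup L(v_1v_2)$ (at most $2\beta m$ by list-sparsity at $u_1$ and $v_1$); and $u_2v_2=u_1v_1$ (the additive $1$). The colour $c_2$ must additionally avoid $\{t_1,\dots,t_a\}$, the $d$-overloaded colours, and the lists $L(u_1v_1)\cup L(u_2v_2)$, and must be such that Lemma \ref{swapedgeinK1} applies with first edge a $K_{n,n}$-edge at $v_1$ coloured $c_2$ and second edge $v_1v_2$ (and analogously at $u_1$, $u_1u_2$ if needed); the exception count from Lemma \ref{swapedgeinK1}, enlarged by $34/n^2$ to absorb the at most $34$ fresh disturbed edges produced by that invocation, yields the remaining terms $4(k+34/n^2)n/d+2c(n)+2\alpha m+2a$ of the stated bound. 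Summed, the exclusions reproduce $6c(n)+2P(n)+2\beta m+2\alpha m+2a+1+4(k+34/n^2)n/d+2H(n)$, and the first numerical hypothesis is precisely what is needed to invoke Lemma \ref{swapedgeinK1}, the second to guarantee that at least one legal $u_2$ remains.

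Having fixed $(u_2,c_2)$, apply Lemma \ref{swapedgeinK1} to produce an intermediate colouring $h'''$ with $h'''(v_1v_2)=c_2$ (and, if necessary, $h'''(u_1u_2)=c_2$) while preserving $h''|_{G_1\cup G_2}$; then the cycle $u_1v_1v_2u_2u_1$ is properly $2$-coloured by $(c_1,c_2)$ and swapping on it yields $h^T$ with $h^T(u_1u_2)=c_1$. All the listed conclusions follow: the only prescribed edge in $T$ is $u_1v_1$ (the other three boundary edges were forced to be non-prescribed, and Lemma \ref{swapedgeinK1} contributes only non-prescribed edges); no edge of $T$ carries a colour from $\{t_1,\dots,t_a\}$ or a $d$-overloaded colour other than $c_1$; the list constraints $c_1\notin L(u_1u_2)\cup L(v_1v_2)$ and $c_2\notin L(u_1v_1)\cup L(u_2v_2)$ block the creation of a new conflict at the final swap; and since the only recoloured edges outside $K_{n,n}$ are $u_1v_1$ and $u_2v_2$, the requested status in $G_1\cup G_2$ can only improve.

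The main obstacle I anticipate is the $|E(T)|\le 34$ bound in the conclusion: a naive accounting of Lemma \ref{swapedgeinK1}'s $\le 34$-edge subgraph plus the three new $4$-cycle edges $u_1v_1,v_2u_2,u_1u_2$ exceeds $34$, so matching the claim requires setting $c_2=h''(u_1u_2)$ to eliminate one preparatory recolouring and then exploiting the overlap between the Lemma \ref{swapedgeinK1} subgraph and the $4$-cycle (or accepting a slightly larger absolute constant that Step V tolerates). A secondary subtlety is bounding $|\{u_2:h''(u_1u_2)\in L(u_2v_2)\}|$ when $c_2=h''(u_1u_2)$ depends on $u_2$: here Lemma \ref{alpha}(i) — at most $c(n)$ edges of $G_2$ of colour $c_1$ contain any given colour in their list — combined with the $H(n)$-slack for disturbed edges provides the $c(n)+H(n)$-type bound absorbed into the $6c(n)+2H(n)$ coefficients.
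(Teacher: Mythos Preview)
Your overall geometric picture is right --- pick $u_2\in V(G_2)$, locate the $c_1$-edge $u_2v_2\in E(G_2)$, recolour the two $K_{n,n}$-edges $u_1u_2$ and $v_1v_2$ to a common colour $c_2\in\{1,\dots,n\}$, then swap on the $4$-cycle $u_1u_2v_2v_1u_1$ --- and this is exactly what the paper does. But two concrete things go wrong in your execution.

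\textbf{Wrong auxiliary lemma, hence the edge-count failure.} You invoke Lemma~\ref{swapedgeinK1} (the $34$-edge lemma) to set the colours of $u_1u_2$ and $v_1v_2$; the paper instead uses Lemma~\ref{swapedgeinK} (the $16$-edge colour-exchange lemma) twice, once with first edge $u_1u_2$ and second edge the $c_2$-edge $e_1$ at $u_1$, and once with first edge $v_1v_2$ and second edge the $c_2$-edge $e_2$ at $v_1$. This gives $|E(T)|\le 2+2\cdot 16=34$ on the nose (the extra ``$2$'' being $u_1v_1$ and $u_2v_2$). Your proposed fix of setting $c_2=h''(u_1u_2)$ still leaves you at $34+3>34$ after adding the cycle edges outside $K_{n,n}$, and moreover removes the freedom to choose $c_2$, which you need for the next point. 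The exclusion terms $4\frac{k+34/n^2}{d}n+2\alpha m+2c(n)+2a$ in the $u_2$-count are precisely the conditions that $u_1u_2$ and $v_1v_2$ be valid \emph{first} edges for Lemma~\ref{swapedgeinK}, not exception counts from Lemma~\ref{swapedgeinK1}.

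\textbf{The requested-edge clause does not come for free.} Your claim that ``the requested status in $G_1\cup G_2$ can only improve'' is incorrect: after the final swap, $u_1v_1$ and $u_2v_2$ acquire colour $c_2$, and if $c_2\in\varphi'(u_1)\cup\varphi'(v_1)\cup\varphi'(u_2)\cup\varphi'(v_2)$ then one of them becomes a \emph{new} requested edge in $G_1$ or $G_2$. The paper explicitly adds the condition
\[
c_2\notin \varphi'(u_1)\cup\varphi'(u_2)\cup\varphi'(v_1)\cup\varphi'(v_2)\setminus\{\varphi'(u_1v_1),\varphi'(u_2v_2)\},
\]
which costs a further $4(\alpha m+c(n))$ excluded colours and is exactly why the second numerical hypothesis carries $8\alpha m$ and $12c(n)$ rather than the $4\alpha m$ and $8c(n)$ one would get from your accounting. (Minor: the ``$+1$'' in the exclusion count is not ``$u_2v_2=u_1v_1$'', which is impossible since they live in $G_2$ and $G_1$ respectively; it comes from the fact that only $n-1$ vertices of $G_2$ are guaranteed to be incident with a $c_1$-edge under $h'$.)
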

	
	\begin{proof}
	Without loss of generality, assume that $u_1v_1 \in E(G_1)$. We choose 
	$u_2 \in V(G_2)$ 
	such that the following properties hold.
	\begin{itemize}
	
	\item The edge $u_2v_2 \in E(G_2)$ satisfying $h''(u_2v_2)=c_1$ 
	is not disturbed and not prescibed.
	Since there are at most $2c(n)+P(n)$ prescribed edges 
	and at most $H(n)$ disturbed edges with color $c_1$ under $h''$;
	and each prescribed or disturbed edge of $G_2$ can be 
	incident to at most two vertices of $G_2$,
	this eliminates at most $2(2c(n)+P(n)+H(n))$ choices.
	
	\item The edge $u_1u_2$ and $v_1v_2$ are both valid choices for the first 
	edge in an application of
	Lemma \ref{swapedgeinK}. As in the proof of the preceding lemma,
	this eliminates at most 
	$$2\Big(2 \dfrac{k+34/n^2}{d} n + \alpha m +c(n)+a\Big)$$ choices. 
	\item $c_1 \notin L(u_1u_2) \cup L(v_1v_2)$. 
	This excludes at most $2\beta m$ choices.
	\end{itemize}
	
	In the coloring $h'$, there are at least $n-1$ vertices in $G_2$ that
	are incident with an edge of color $c_1$; thus we have at least
	$$n - 1 - 6c(n)- 2P(n) - 2\beta m - 2\alpha m - 
	2a - 4 \dfrac{k+34/n^2}{d} n - 2H(n)$$	
	choices for $u_2$. We note that this expression is greater 
	than zero by assumption, so we can indeed make the choice.

	Next, we want to choose a color $c_2 \in \{1,2,\dots,n\}$ 
	(which implies $c_2 \neq c_1$) such that the following properties hold.	
	\begin{itemize}
	\item The edges $e_1$ and $e_2$ colored $c_2$ under $h''$
	that are incident with $u_1$ and $v_1$, respectively,
	are both valid choices for the second edge 
	in 
	an application of Lemma \ref{swapedgeinK}; this eliminates at most 
	$$2\Big(4 \dfrac{k+34/n^2}{d} n+ a+1+4c(n)+2\beta m + 2 \alpha m +2dn+P(n)\Big)$$
	choices. 
	\item $c_2 \notin L(u_1v_1) \cup L(u_2v_2)$. 
	This excludes at most $2\beta m$ choices.
	\item $c_2 \notin \varphi'(u_1) \cup \varphi'(u_2) \cup  \varphi'(v_1) \cup 
	\varphi'(v_2) \setminus \{\varphi'(u_1v_1), \varphi'(u_2v_2)\}$.
	This condition is needed to ensure that performing a series
	of swaps on $T$, does not result in a ``new'' requested edge in $G_1$ or $G_2$.
	Since there are at most $\alpha m + c(n)$ prescribed 
	edges at each vertex of $K_{2n}$ under $\varphi'$,
	this excludes at most $4(\alpha m+c(n))$ choices.

	\end{itemize}
	Thus we have at least
	$$n -  (8 \dfrac{k+34/n^2}{d} n+ 2a+2+12c(n)+6\beta m + 8 \alpha m +4dn+2P(n)\Big)$$
	choices. By assumption, this expression is greater than zero, 
	so we can indeed choose such
	color $c_2$. Now, since 
	$$\left \lfloor{\frac{n}{2}}\right \rfloor - 2\epsilon n - 
	6d n - 5 \dfrac{k+34/n^2}{d}n - 4\alpha m - 8c(n) - 3a - 3\beta m - 2P(n)- 6 >0,$$
	we can apply Lemma \ref{swapedgeinK} two consecutive 
	times to exchange the colors of $u_1u_2$ and $e_1$, and
	similarly for $v_1v_2$ and $e_2$. Finally, by swapping on the $2$-colored $4$-cycle
	$u_1u_2v_2v_1u_1$, we get the proper coloring $h^T$ such that 
	$h^T(u_1u_2)=h''(u_1v_1)=c_1$.
	
	Note that the subgraph $T$,
	consisting of all edges used in the
	swaps above,
	contains two edges $u_1v_1$ and $u_2v_2$ 
	and the additional edges needed for two applications of
	Lemma \ref{swapedgeinK}; this 
	implies that $T$ uses at most $2+16 \times 2 =34$ edges.
	Furthermore, except (possibly) $u_1v_1$, no edges in 
	$T$ are prescribed; except $c_1$,
	$T$ only contains edges with colors that are not $d$-overloaded.
	Additionally, $T$ does not contain an edge with a color in $\{t_1,\dots,t_a\}$.
	\end{proof}

	The following lemma is used for transforming the coloring $h''$
	into a coloring where an edge $e_y \in E(K_{n,n})$
	is colored by the color $h''(e_x)$ of an adjacent edge
	$e_x \in E(K_{n,n})$, where $h''(e_x) \in \{n+1,\dots,m\}$.
	When applying the lemma we shall refer to 
	$u_1u_2$ as the ``first edge'' and $u_1v_2$ as the ``second edge''.
	
	\begin{lemma}
	\label{swapedgeinKnew}
	Suppose that $h''$ is a proper $m$-edge coloring of $K_{2n}$ 
	obtained from $h'$ by performing some sequence of swaps
	on $h'$ and that at most $kn^2$ edges in $h''$ are disturbed 
	for some constant $k>0$. Suppose further that
	for each color $c$, 
	at most $2c(n)+P(n)$ edges with color $c$ under $h''$ are prescribed,
	and at most $H(n)$ edges with color $c$ are disturbed.
	Let $\{t_1,\dots,t_a\}$ be a set of colors from $h''$.
	If $$\left \lfloor{\frac{n}{2}}\right \rfloor - 2\epsilon n - 
	6d n - 5 \dfrac{k+101/n^2}{d}n - 4\alpha m - 8c(n) - 3a - 3\beta m - 2P(n)- 6 >0$$
	and
	$$n -  \Big(8 \dfrac{k+101/n^2}{d} n+ 2a+2+12c(n)+6\beta m + 
	8 \alpha m +4dn+2P(n) + 2H(n)\Big)>0$$
	then for any edge $u_1u_2$ of $K_{n,n}$ with 
	$$h''(u_1u_2)=c_1, \; c_1 \in  \{n+1,\dots,m\}, \; c_1 \notin  \{t_1,\dots,t_a\}$$
	and all but at most
	$$5c(n) + 2P(n)  + \alpha m + \beta m + a + 2 + 2\dfrac{k+67/n^2}{d} n + 2H(n)$$ 
	choices of a vertex $v_2$ satisfying $u_1v_2 \in K_{n,n}$, there is a 
	subgraph $T$ of $K_{2n}$ and a proper $m$-edge coloring $h^T$, 
	obtained from $h''$ by performing a sequence of swaps on $4$-cycles in $T$, 
	that satisfies the following:
	\begin{itemize}
	
	\item except $c_1$, any color of an edge in $T$ under $h''$ is not $d$-overloaded;
 	
	\item except $u_1u_2$, no edge of $T$ is prescribed;
	
	\item $h''$ and $h^T$ differs on at most $67$ edges 
	\emph{(}i.e. $T$ contains at most $67$ edges\emph{)};
	
	\item no edge with a color in $\{t_1,\dots,t_a\}$ under $h''$ is in $T$;
	
	\item $h^T(u_1v_2)=h''(u_1u_2)=c_1$;
	
	\item if there is a conflict of $h^T$ with $L$, 
	then this edge is also a conflict of $h''$ with $L$;
	
	\item any edge in $G_1$ or $G_2$ that is requested under $h^T$ (with respect
	to $\varphi'$) is also
	requested under $h''$.
	\end{itemize}
	\end{lemma}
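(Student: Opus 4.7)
The plan is to mimic the proof of Lemma~\ref{swapedgeinG}, adding a preparatory phase, since the given edge $u_1u_2$ lies in $E(K_{n,n})$ while its color $c_1\in\{n+1,\dots,m\}$ naturally belongs to edges of $G_1$ or $G_2$ rather than of $K_{n,n}$. The strategy has two phases: first move color $c_1$ off $u_1u_2$ and onto an auxiliary edge $u_1w_1\in E(G_1)$; then apply Lemma~\ref{swapedgeinG} with first edge $u_1w_1$ and target vertex $v_2$ to produce a coloring $h^T$ with $h^T(u_1v_2)=c_1$. This two-phase structure accounts for the bound $|E(T)|\le 67$: two invocations of the Lemma~\ref{swapedgeinG}-style mechanism (each contributing at most $34$ edges to $T$), with one edge saved by sharing the intermediate edge $u_1w_1$.

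Assume without loss of generality that $u_1\in V(G_1)$, so that $u_2,v_2\in V(G_2)$. First I would pick $v_2\in V(G_2)$ while excluding: those vertices for which $u_1v_2$ is prescribed under $\varphi'$ or $c_1\in L(u_1v_2)$; those that would force the intermediate edge $u_1w_1$ to be a bad first-edge candidate in the subsequent application of Lemma~\ref{swapedgeinG}; and those for which the parallel $K_{n,n}$-edge needed in the first-phase $4$-cycle is either disturbed or would create new conflicts. The aggregate number of excluded vertices matches the quantity appearing in the statement. Next, for the first phase, I would choose an auxiliary vertex $w_1\in V(G_1)$, a companion $q\in V(G_2)$, and a color $c_2\in\{1,\dots,n\}$ that is not $d$-overloaded, so that after preparatory swaps via Lemma~\ref{swapedgeinK} on $K_{n,n}$-edges incident to $u_1$ and $u_2$ respectively, the $4$-cycle $u_1u_2qw_1u_1$ becomes $2$-colored in $\{c_1,c_2\}$. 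The parallel edge $w_1q\in E(K_{n,n})$ must already carry color $c_1$ in $h''$; we locate such a pair $(w_1,q)$ among the at most $H(n)$ disturbed $K_{n,n}$-edges currently colored $c_1$. Swapping on this $4$-cycle places $c_1$ on $u_1w_1\in E(G_1)$. The second phase then applies Lemma~\ref{swapedgeinG} to the edge $u_1w_1$, and combining the guarantees of the two phases (together with Lemma~\ref{swapedgeinK}) verifies the itemised properties of $h^T$.

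The hard part is the first phase. Lemma~\ref{swapedgeinG} hinges on both parallel edges of its $4$-cycle naturally carrying color $c_1\in\{n+1,\dots,m\}$, which is automatic when they lie in $G_1$ and $G_2$ under $h''$. In the present setting, one parallel edge instead lies in $K_{n,n}$, where color $c_1$ is not native; consequently, we must exploit the small population of $K_{n,n}$-edges colored $c_1$ that arise through disturbance. Controlling that enough valid choices of $w_1$, $q$, and $c_2$ remain after excluding those ruled out by prescriptions, list-conflicts, $d$-overloaded colors, and the additional constraints required to keep $|E(T)|\le 67$ is the most delicate portion of the argument; this scarcity of usable $4$-cycles is precisely what produces the factor $2H(n)$ in the exclusion count for $v_2$ and the error term $67/n^2$ in the bounds.
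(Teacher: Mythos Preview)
Your approach has a genuine gap in Phase~1. You want to find a parallel edge $w_1q\in E(K_{n,n})$ that already carries color $c_1$ under $h''$, and you propose to locate it among the disturbed $K_{n,n}$-edges currently colored $c_1$. But the hypothesis only bounds the number of disturbed $c_1$-edges from \emph{above} by $H(n)$; it gives no lower bound. Since $c_1\in\{n+1,\dots,m\}$ is not native to $K_{n,n}$ under $h'$, the only $K_{n,n}$-edge colored $c_1$ under $h''$ may well be $u_1u_2$ itself, in which case no such $w_1q$ exists and your Phase~1 cannot be carried out. Even when a second such edge exists, you have essentially no freedom in choosing $(w_1,q)$, so the additional constraints needed for Lemma~\ref{swapedgeinG} in Phase~2 and for avoiding prescribed edges, list conflicts, and forbidden colors cannot be imposed. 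Your explanation of the $2H(n)$ term as arising from ``scarcity of usable $4$-cycles'' reflects this misreading: in the paper it arises for the opposite reason, namely from requiring a \emph{natural} $G_2$-edge with color $c_1$ to be undisturbed.

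The paper's route avoids this obstruction by never seeking a second $K_{n,n}$-edge that already has color $c_1$. Instead it picks $v_2\in V(G_2)$ so that the $G_2$-edge $v_2x$ with $h''(v_2x)=c_1$ is undisturbed and unprescribed, then picks $v_1\in V(G_1)$ so that $v_2v_1$ is a valid second edge for Lemma~\ref{swapedgeinG} with first edge $v_2x$. A single application of Lemma~\ref{swapedgeinG} thus \emph{manufactures} a $K_{n,n}$-edge $v_1v_2$ with color $c_1$. In parallel, two applications of Lemma~\ref{swapedgeinK} place a common color $c_2\in\{1,\dots,n\}$ on $u_1v_2$ and $u_2v_1$. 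The final $4$-cycle $u_1u_2v_1v_2u_1$ then lies entirely in $K_{n,n}$, is $2$-colored in $\{c_1,c_2\}$, and a swap yields $h^T(u_1v_2)=c_1$. The edge count is $1+2\cdot 16+34=67$: one edge $u_1u_2$, two uses of Lemma~\ref{swapedgeinK}, and one use of Lemma~\ref{swapedgeinG}.
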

	
	\begin{proof}
	Without loss of generality, assume that $u_1 \in V(G_1)$;
	this implies $u_2 \in V(G_2)$. We choose $v_2 \in V(G_2)$ such that
	the following properties hold.
	\begin{itemize}
	\item The edge $v_2x \in E(G_2)$ satisfying $h''(v_2x)=c_1$ is not disturbed.
	As in the preceding lemma, this eliminates at most $2H(n)$ choices.
	
	\item The edge $v_2x$ is not prescribed and $v_2 \neq u_2$. 
	This eliminates at most $2(2c(n)+P(n))+1$ choices.
	
	\item The edge $u_1v_2$ is a valid choice for the 
	first edge 
	in an application of 
	Lemma \ref{swapedgeinK}. This eliminates at most 
	$2 \dfrac{k+67/n^2}{d} n + \alpha m +c(n)+a$ choices. 
	
	\item $L(u_1v_2)$ does not contain the color $c_1$. This eliminates at most $\beta m$ choices.
	\end{itemize}
	In the coloring $h'$,
	there are at least $n-1$ vertices in $G_2$ that
	are incident with an edge of color $c_1$; thus we have at least 
	$$n -1 - 5c(n)- 2P(n)  - \alpha m - \beta m - a -1 - 2\dfrac{k+67/n^2}{d} n - 2H(n)$$	
	choices for $v_2$. Since this expression is greater than zero by assumption,
	we can indeed make the choice.
	
	Next, we want to choose a vertex $v_1 \in V(G_1)$ satisfying the following:
	\begin{itemize}
	
	\item The edge $v_2v_1$ is a valid choice for the second edge 
	in an application of Lemma \ref{swapedgeinG}. 
	This eliminates at most $$6c(n) + 2P(n) + 2\beta m + 2\alpha m +
	2a + 1+ 4 \dfrac{k+(34+67)/n^2}{d} n + 2H(n)$$ choices.
	
	\item The edge $u_2v_1$ is a valid choice for the first 
	edge 
	in an application of 
	Lemma \ref{swapedgeinK} and $v_1 \neq u_1$. 
	This eliminates at most $2 \dfrac{k+67/n^2}{d} n + \alpha m + c(n) + a +1$ choices. 
	
	\item $L(u_2v_1)$ does not contain the color $c_1$. This eliminates at most $\beta m$ choices.
	\end{itemize}
	Thus we have at least $$n - 7c(n)- 2P(n)  - 3\alpha m - 3\beta m - 
	3a - 2  - 6\dfrac{k+101/n^2}{d} n - 2H(n)$$
	choices for $v_1$. Since this expression is greater than zero by assumption, 
	we can indeed make the choice.
	
	Finally, we want to choose a color $c_2 \in \{1,2,\dots,n\}$ 
	(which implies $c_2 \neq c_1$) such that the following properties hold.	
	\begin{itemize}
	
	\item The edges $e_1$ and $e_2$
	colored $c_2$ under $h''$ that
	are adjacent to $u_1$ and $u_2$, respectively,
	are both valid choices for the second edge 
	in 
	an application of Lemma \ref{swapedgeinK}; this eliminates at most 
	$$2\Big(4 \dfrac{k+67/n^2}{d} n+ a+1+4c(n)+2\beta m + 2 \alpha m +2dn+P(n)\Big)$$
	choices. 
	\item $c_2 \notin L(u_1u_2) \cup L(v_1v_2)$. 
	This excludes at most $2\beta m$ choices.
	\end{itemize}
	Thus we have at least
	$$n -  \Big(8 \dfrac{k+67/n^2}{d} n+ 2a+2+8c(n)+6\beta m + 
	4 \alpha m + 4dn + 2P(n)\Big)$$
	choices. By assumption, this expression is greater than zero, 
	so we can indeed choose such
	edges $e_1$ and $e_2$.
	
	Now, since 
	$$\left \lfloor{\frac{n}{2}}\right \rfloor - 2\epsilon n - 6d n - 
	5 \dfrac{k+101/n^2}{d}n - 4\alpha m - 8c(n) - 3a - 3\beta m - 2P(n)- 6 >0$$
	and
	$$n -  \Big(8 \dfrac{k+101/n^2}{d} n+ 2a+2+12c(n)+6\beta m + 
	8 \alpha m +4dn+2P(n) + 2H(n)\Big)>0,$$
	we can apply Lemma \ref{swapedgeinK} two consecutive times to exchange the colors 
	of $u_1v_2$ and $e_1$, and
	similarly for $u_2v_1$ and $e_2$. 
	We can thereafter apply Lemma \ref{swapedgeinG} to obtaing a coloring
	where $v_1v_2$ is colored $c_1$.
	Now, by swapping on the $2$-colored $4$-cycle
	$u_1u_2v_1v_2u_1$, we get the proper coloring $h^T$ such 
	that $h^T(u_1v_2)=h''(u_1u_2)=c_1$.
	Since the applications of Lemma \ref{swapedgeinK}  and \ref{swapedgeinG} 
	do not result in any ``new'' requested edges in $G_1$ or $G_2$,
	the transformations in this lemma 
	do not yield any ``new'' requested edges in $G_1$ or $G_2$;
	the same holds for conflict edges in $K_{2n}$.
	
	Note that the subgraph $T$,
	consisting of all edges used in the
	swaps above
	contains an edge $u_1u_2$ and 
	all the additional edges needed for
	applying Lemma \ref{swapedgeinK} twice and Lemma \ref{swapedgeinG} once;
	this implies that $T$ contains at most $1+16 \times 2 +34=67$ edges.
	Furthermore, except $u_1u_2$, no edges in $T$ are prescribed;
	except $c_1$,
	$T$ only contains edges with colors that are not $d$-overloaded.
	Additionally,  $T$ does not contain an edge with a color in 
	$\{t_1,\dots,t_a\}$.
	\end{proof}
	
	Given a color $c_1 \in \{1,2,\dots,n\}$, the final lemma in this 
	step is used for obtaining a coloring where an edge
	in $G_1$ or $G_2$ is colored $c_1$. In applications of this
	lemma we shall refer to $uv$ as the ``first edge''.
	
	\begin{lemma}
	\label{swapedgecolor}
	Suppose that $h''$ is a proper $m$-edge coloring of $K_{2n}$ obtained 
	from $h'$ by performing some sequence of swaps
	on $h'$ and that at most $kn^2$ edges in $h''$ are disturbed for 
	some constant $k>0$. Suppose further that
	for each color $c$, at most $2c(n)+P(n)$ edges with color $c$ under $h''$ are 
	prescribed,
	and at most $H(n)$ edges with color $c$ are disturbed.
	Moreover, let $\{t_1,\dots,t_a\}$ be a set of colors from $h''$.
	If $$\left \lfloor{\frac{n}{2}}\right \rfloor - 2\epsilon n - 
	6d n - 8 \dfrac{k+104/n^2}{d}n - 4\alpha m - 15c(n) - 4a - 6\beta m - 5P(n) - 2H(n) - 6 >0$$
	then for any color $c_1 \in \{1,2,\dots,n\}$, 
	where $c_1 \notin \{t_1,\dots,t_a\}$,
	there are at least
	$$\left \lfloor{\frac{n}{2}}\right \rfloor - 7c(n) - 3P(n)  - dn - 2H(n)$$ 
	choices of an edge $uv \in E(G_1)$ $(E(G_2))$, such that there is 
	a subgraph $T$ of $K_{2n}$ and a proper $m$-edge coloring $h^T$, 
	obtained from $h''$ by performing a sequence of swaps on $4$-cycles in $T$, 
	that satisfies the
	following:
	\begin{itemize}
	
	\item except $c_1$, any color of an edge in $T$ under $h''$ is not 
	$d$-overloaded;
 	
	\item $T$ contains no prescribed edge;
	
	\item $h''$ and $h^T$ differs on at most $70$ edges 
	\emph{(}i.e. $T$ contains at most $70$ edges\emph{)};
	
	\item no edge with color in $\{t_1,\dots,t_a\}$ under $h''$ is in $T$;
	
	\item $h^T(uv)=c_1$;
	
	\item if there is a conflict of $h^T$ with $L$,
	then this edge  is also a conflict of $h''$ with $L$;
	
	\item any edge in $G_1$ or $G_2$ that is requested under $h^T$ (with respect
	to $\varphi'$) is also
	requested under $h''$.
	\end{itemize}
	\end{lemma}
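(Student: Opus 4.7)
The strategy is to reduce the task to a swap on a single ``bridging'' 4-cycle in $K_{2n}$. For any candidate edge $uv \in E(G_1)$ (the $E(G_2)$ case being symmetric), set $c_2 := h''(uv)$ and let $x, y \in V(G_2)$ be the unique vertices with $h''(ux) = h''(vy) = c_1$. The 4-cycle $u$-$x$-$y$-$v$-$u$ has two $K_{n,n}$-edges ($ux, vy$) colored $c_1$ together with the two $G$-edges $uv$ and $xy$. When this 4-cycle is 2-colored with colors $c_1, c_2$---equivalently, when $h''(xy) = c_2$---a single swap transfers $c_1$ onto both $uv \in E(G_1)$ and $xy \in E(G_2)$, so in particular $h^T(uv) = c_1$ with $|T| = 4$.

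The key structural observation is that in the standard coloring $h$, the explicit identity $h_{G_1}(p_ip_j) = h_{G_2}(q_iq_j)$ forces $h(uv) = h(xy)$ for many pairs $(u,v)$, controlled by the permutation of $\{1,\dots,n\}$ encoding the $c_1$-matching in $h_K$; the number of surviving pairs for a fixed $c_1$ is $\lfloor n/2 \rfloor$ up to lower-order terms, which is the origin of the $\lfloor n/2 \rfloor$ term in the stated bound. The random permutation $\rho$ preserves this count in $h'$, and the passage to $h''$ (with at most $kn^2$ disturbed edges and $H(n)$ disturbed edges per color) degrades the count only by the exclusions associated with disturbed, prescribed, conflict, and overloaded objects.

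I would then bound the exclusions following the pattern of Lemmas \ref{swapedgeinG} and \ref{swapedgeinKnew}: $7c(n)$ from the prescribed-or-conflict edge bounds at the vertices $u, v, x, y$ and the incident K-edges; $3P(n)$ from the color-specific prescribed-edge counts for $c_1$ and $c_2$; $dn$ from the $d$-overloaded colors; and $2H(n)$ from disturbed $c_1$- and $c_2$-colored edges. For each surviving candidate $uv$ where the bridging 4-cycle is already 2-colored in $h''$, a single swap ($|T|=4$) suffices. When it is not, a sequence of at most two applications of Lemma \ref{swapedgeinK} (each of size at most $16$) together with one application of Lemma \ref{swapedgeinG} (size at most $34$) is used to realign the colors on the 4-cycle, followed by the final 4-cycle swap, giving $|T| \le 16 + 16 + 34 + 4 = 70$.

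The main obstacle is precisely this realignment step when $h''(xy) \neq c_2$: none of the previous lemmas directly moves a prescribed G-color onto a prescribed G-edge. The workaround I envision is to apply Lemma \ref{swapedgeinG} to an auxiliary G-edge in $G_2$ adjacent to $xy$ carrying the color $c_2$, thereby freeing $c_2$ onto an edge of $K_{n,n}$; then use Lemma \ref{swapedgeinK} to route $c_2$ through $K_{n,n}$ so that, after the final 4-cycle swap, $xy$ ends up colored $c_1$ while the intermediate $K_{n,n}$-edges recover their original K-colors. Tracking that no new conflicts or newly requested edges in $G_1$ or $G_2$ are introduced along this route---and controlling the cumulative disturbance via the inflated parameter $k + 104/n^2$---is the most delicate part of the argument, and is exactly what the hypothesis inequality with constants $8(k + 104/n^2)/d$, $6\beta m$ and $15c(n)$ is calibrated to support.
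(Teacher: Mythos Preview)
Your approach has a genuine gap at its core. The ``key structural observation''---that for a fixed $c_1$, roughly $\lfloor n/2\rfloor$ edges $uv\in E(G_1)$ have the associated bridging $4$-cycle $uxyv$ already $2$-colored in $h$---is asserted but never verified, and in fact it does not hold in general. Writing $\sigma$ for the permutation of $\{1,\dots,n\}$ given by the $c_1$-matching in $h_K$, your claim amounts to $i+j\equiv \sigma(i)+\sigma(j)$ modulo the relevant modulus for $\lfloor n/2\rfloor$ pairs $\{i,j\}$. But $\sigma$ is essentially a shift (or a pair of shifts on the two halves when $n=2r$), while the $G_1$ and $G_2$ colorings use $i+j$ modulo $n-1$ (even $n$) or $n$ (odd $n$); these moduli do not match the mod-$r$ structure of $h_K$, and for generic $c_1$ the congruence fails for all or almost all pairs. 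So the $\lfloor n/2\rfloor$ count has no foundation. Your realignment fallback then becomes the entire argument, and here the workaround you sketch does not close: none of Lemmas~\ref{swapedgeinK}, \ref{swapedgeinK1}, \ref{swapedgeinG}, \ref{swapedgeinKnew} places a prescribed color onto a $G_2$-edge, and routing $c_2$ through $K_{n,n}$ does not by itself force $h(xy)=c_2$ before the final swap.

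The paper's proof avoids this entirely by reversing the order of choices. It first picks a non-$d$-overloaded colour $c_2\in\{n+1,\dots,m\}\setminus\{t_1,\dots,t_a\}$, then selects $uv\in E(G_1)$ with $h''(uv)=c_2$ (this is where the $\lfloor n/2\rfloor$ arises: it is simply the size of the $c_2$-colour class in $G_1$), and \emph{independently} selects $xy\in E(G_2)$ with $h''(xy)=c_2$. There is no structural coupling between $uv$ and $xy$; instead, two applications of Lemma~\ref{swapedgeinK1} (not \ref{swapedgeinK} or \ref{swapedgeinG}) move the colour $c_1$ from the $c_1$-edges $e_1,e_2$ at $u,v$ onto the freely chosen $K_{n,n}$-edges $ux$ and $vy$, after which the $4$-cycle $uvyx$ is $2$-coloured and a single swap finishes. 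The edge count is $2+2\cdot 34=70$, and the exclusion terms $7c(n)+3P(n)+dn+2H(n)$ come directly from the conditions imposed on the choice of $uv$ (not prescribed, not disturbed, $c_1\notin L(uv)$, $c_1\notin\varphi'(u)\cup\varphi'(v)$, and $e_1,e_2$ undisturbed). The freedom to choose $xy$ separately is precisely what lets the displayed hypothesis with $8(k+104/n^2)/d$, $15c(n)$, $6\beta m$ absorb the constraints from the two invocations of Lemma~\ref{swapedgeinK1}.
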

	
	\begin{proof}
	We will prove the lemma assuming $uv \in E(G_1)$;
	the case when $uv \in E(G_2)$ is of course analogous.
	Since  at most $kn^2$ edges in $h''$ are disturbed, there are at most
	$kn/d$ $d$-overloaded colors;
	by assumption, $n -1 - kn/d - a>0$, so we can choose a color
	$c_2 \in \{n+1,n+2,\dots,m\}$ such that $c_2 \notin \{t_1,\dots,t_a\}$
	is not a $d$-overloaded color. Next, we choose an edge
	$uv \in G_1$ satisfying $h''(uv)=c_2$
	such that the following properties hold.
	\begin{itemize}
	
	\item The edge $uv$ is not prescribed.
	Since there are at most $2c(n)+P(n)$ prescribed edges with
	color $c_2$ in $h''$, this eliminates at most $2c(n)+P(n)$ choices.
	
	\item The edge $uv$ is not disturbed and $c_1 \notin L(uv)$.
	 Since the color $c_2$ is not $d$-overloaded
	and for each pair of colors $c_1,c_2 \in \{1,2,\dots,m\}$, 
	there are at most $c(n)$ edges $e$
	in $K_{2n}$ with $h'(e) =c_2$ and $c_1 \in L(e)$
	%
	and at most $dn$ edges of color $c_2$ have been used in the 
	swaps for
	transforming $h'$ to $h''$;
	this eliminates at most $c(n)+dn$ choices.
	
	\item $c_1 \notin \varphi'(u) \cup 
	\varphi'(v) \setminus \{\varphi'(uv) \}$.
	This condition is needed to ensure that after performing the 
	swaps in this lemma, 
	$uv$ is not a requested edge in $G_1$.
	Since there are at most $2c(n)+P(n)$ prescribed
	edges with color $c_1$ in $h''$,
	this excludes at most $2(2c(n)+P(n))$ choices. 
	
	\item The edges $e_1$ and $e_2$ colored $c_1$ under $h''$ that are incident
	with $u$ and $v$, respectively,
	are not disturbed. This condition implies that $e_1, e_2 \in K_{n,n}$
	and this eliminates at most $2H(n)$ choices.
	\end{itemize}
	Under $h'$
	there are $\left \lfloor{\frac{n}{2}}\right \rfloor$ edges in $G_2$ that
	are colored $c_2$; thus we have at least
	$$\left \lfloor{\frac{n}{2}}\right \rfloor  - 7c(n)- 3P(n) - dn- 2H(n)$$	
	choices for an edge $uv$. Since this expression is 
	greater than zero by assumption, we can indeed make the choice.
	
	Next, we want to choose an edge $xy \in E(G_2)$ satisfying $h''(xy)=c_2$ 
	such that the following properties hold.
	\begin{itemize}
	
	\item $c_1 \notin L(xy) \cup \varphi'(x) \cup 
	\varphi'(y) \setminus \{\varphi'(xy)\}$,
	and the edge $xy$ is not prescibed and not disturbed.
	As before, this eliminates at most $7c(n)+3P(n) + dn$ choices.
	
	\item The edges $ux$ and $vy$ are both valid choices for the second 
	edge 
	in 
	an application of Lemma \ref{swapedgeinK1}. 
	This eliminates at most $$2\Big(4c(n) + P(n) + 2\beta m + 2\alpha m + 
	2a + 1 + 4 \dfrac{k+(34+70)/n^2}{d} n + H(n)\Big)$$ choices.
	\item $c_2 \notin L(ux) \cup L(vy)$. This eliminates at most 
	$2\beta m$ choices.
	\end{itemize}
	Thus we have at least $$\left \lfloor{\frac{n}{2}}\right \rfloor - 
	\Big(15c(n) + 5P(n) + 4\alpha m + 6\beta m + dn +  4a + 
	2 + 8\dfrac{k+104/n^2}{d} n  + 2H(n)\Big)$$	
	choices for $xy$. Since this expression is greater than zero by assumption, 
	we can indeed make the choice.
	
	Now, since 
	$$\left \lfloor{\frac{n}{2}}\right \rfloor - 2\epsilon n - 
	6d n - 8 \dfrac{k+104/n^2}{d}n - 4\alpha m - 15c(n) - 4a - 6\beta m - 5P(n) - 2H(n) - 6 >0$$
	we can apply Lemma \ref{swapedgeinK1} two consecutive times to 
	obtain a coloring where $ux$ is colored $c_1$ and $vu$ is colored $c_1$.
	Thereafter, finally, by swapping on the $2$-colored $4$-cycle
	$uvyxu$, we get the proper coloring $h^T$ such that $h^T(uv)=h''(e_1)=c_1$.
	Since the swaps used when applying Lemma \ref{swapedgeinK1}
	do not result in any ``new'' requested edges in $G_1$ or $G_2$,
	the transformations in this lemma do not yield any new requested edges 
	in $G_1$ or $G_2$; similarly for conflict edges of $K_{2n}$.
		
	Note that the subgraph $T$, consisting of all edges used in the
	swaps above,
	contains two edges 
	$uv$ and $xy$ and all additional edges needed for applying
	Lemma \ref{swapedgeinK1} twice; 
	this implies that $T$ contains at most $2+34 \times 2=70$ edges.
	Furthermore, none of these edges in $T$ are prescribed; except $c_1$, 
	$T$ only contains edges with colors that are not $d$-overloaded.
	Additionally, $T$ does not contain an edge with a color in $\{t_1,\dots,t_a\}$.
	\end{proof}

	{\bf Step V:} Let $\varphi'$ be the proper $m$-precoloring of $K_{2n}$ 
	obtained in Step III and $h'$
	be the $m$-edge coloring of $K_{2n}$ obtained in Step II. 
	In this step we shall from $h'$ construct a coloring $h_q$ of $K_{2n}$
	that agrees with $\varphi$ and which avoids $L$.
	This is done iteratively by steps:
	in each step we consider a prescribed edge $e$
	of $K_{2n}$, such that $h'(e) \neq \varphi'(e)$, and
	perform a sequence of swaps on $2$-colored $4$-cycles to 
	obtain a coloring $h_e$ where
	$e$ is colored $\varphi'(e)$.
	In this process, special care is taken so that these swaps
	do not result in
	any new requested edges in $G_1$ or $G_2$;
	in particular, this implies that every requested edge
	with a color in $\{1,2,\dots,n\}$ is always in $K_{n,n}$
	for any intermediate coloring of $K_{2n}$ that is constructed in
	this iterative procedure.
	
	We shall use the following lemma.

	\begin{lemma}
	\label{correctedge}
	Suppose that $h''$ is a proper $m$-edge coloring of $K_{2n}$
	obtained from $h'$ by performing some sequence of swaps
	on $h'$ and that at most $kn^2$ edges in $h''$ are disturbed for
	some constant $k>0$.
	Suppose further that
	\begin{itemize}
	
	\item for each color $c$, at most $2c(n)+P(n)$ edges with
	color $c$ under $h''$ are prescribed;
	
	\item at most $H(n)$ edges with color $c$ are disturbed;
	
	\item all requested edges with a color
	from $\{1,2,\dots,n\}$ under $h''$ are in $K_{n,n}$.
	
	\item if $e$ is a prescribed edge of $K_{n,n}$ that satisfies
	$\varphi'(e) \neq h''(e)$, then $h''(e) \in \{1,\dots,n\}$.

	\end{itemize}
	Let $uv$ be an edge of $K_{2n}$ such that
	$$h''(uv)=c_1, \;\; \varphi'(uv)=c_2, \;\; c_1 \neq c_2.$$
	and set $$M=\left \lfloor{\frac{n}{2}}\right \rfloor - 
	\Big(2\epsilon n+24c(n) + 
	6dn + 9P(n) + 6\beta m + 4\alpha m + 10 + 8 \dfrac{k+(67+205)/n^2}{d} n + 
	6H(n)\Big)$$
	If $M>0$, then there is a subgraph $T$ of $K_{2n}$
	and a proper $m$-edge coloring $h^T$,
	obtained from $h''$ by performing a sequence of swaps on $4$-cycles in $T$, 
	that satisfies the following:
	\begin{itemize}
	
	\item $h^T(uv)=c_2$;
	
	\item $h''$ and $h^T$ differs on at most $205$ edges
	\emph{(}i.e. $T$ contains at most $205$ edges\emph{)};
	
	\item besides $uv$, $h''$ and $h^T$ disagree on at most $2$ prescribed edges;
	
	\item if $h''$ and $h^T$ disagree on a prescribed edge $ab$ 
	\emph{(}where $ab \neq uv$\emph{)}, then $ab$ is a requested edge,
	$h^T(ab)$ is not $d$-overloaded and $h''(ab) \neq \varphi'(ab)$;
	
	\item the subgraph $T$ contains at most three edges with color $c_1$ under $h''$, 
	and at most four edges with color $c_2$ under $h''$;
	
	\item except $c_1$ and $c_2$, no colors of edges in $T$ (under $h''$)
	are $d$-overloaded;
	
	\item if there is a conflict of $h^T$ with $L$, 
	then this edge is also a conflict of $h''$ with $L$;
	
	\item any edge in $G_1$ or $G_2$ that is requested under $h^T$ (with respect
	to $\varphi'$) is also
	requested under $h''$.

	

	\end{itemize}
	\end{lemma}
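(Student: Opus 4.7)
The plan is to split into cases depending on whether $uv \in E(K_{n,n})$ or $uv \in E(G_1) \cup E(G_2)$, and on which color class --- $\{1,\ldots,n\}$ or $\{n+1,\ldots,m\}$ --- each of $c_1$ and $c_2$ lies in. In every case the strategy is the same: locate an edge currently bearing color $c_2$, use a bounded number of applications of the transfer lemmas \ref{swapedgeinK1}, \ref{swapedgeinG}, \ref{swapedgeinKnew}, \ref{swapedgecolor} to reposition the colors $c_1$ and $c_2$ so that a $2$-colored $4$-cycle in colors $\{c_1,c_2\}$ appears through $uv$, and then perform one final swap on that cycle to obtain $h^T(uv)=c_2$.

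I would begin with the principal case $uv \in E(K_{n,n})$ (so $c_1 \in \{1,\ldots,n\}$ by the fourth hypothesis) and $c_2 \in \{1,\ldots,n\}$. Let $x$ be the unique $c_2$-neighbor of $u$ in $K_{n,n}$ and $z$ the unique $c_1$-neighbor of $x$; then $u,v,z,x$ would form a $\{c_1,c_2\}$-$4$-cycle provided $h''(vz)=c_2$, whereas the current $c_2$-edge at $v$ is some edge $vw$ with (generically) $w \neq z$. I would apply Lemma \ref{swapedgeinK1} with first edge $vw$ and second edge $vz$ to transfer color $c_2$ onto $vz$, then swap on the resulting $4$-cycle $uvzx$. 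The case $c_2 \in \{n+1,\ldots,m\}$ is treated analogously but uses Lemma \ref{swapedgeinKnew} first to import color $c_2$ into $K_{n,n}$ on a suitable edge adjacent to $u$. When $uv \in E(G_1)$ (and symmetrically for $G_2$), Lemma \ref{swapedgeinG} is used if $c_2 \in \{n+1,\ldots,m\}$ and Lemma \ref{swapedgecolor} if $c_2 \in \{1,\ldots,n\}$ in order to arrange a $\{c_1,c_2\}$-$4$-cycle through $uv$; the finish is again a single swap.

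In each subcase the auxiliary vertices and edges must be selected to avoid a bounded number of forbidden configurations (prescribed edges other than $uv$, lists containing $c_1$ or $c_2$, $d$-overloaded colors, and the forbidden colors $t_1,\ldots,t_a$), and the hypothesis $M>0$ is precisely the counting inequality guaranteeing that all such choices remain simultaneously available. The bound $|E(T)| \le 205$ then follows by summing the at-most-$70$-edge contributions of the applied lemmas. The main obstacle will be verifying all the side conditions on $h^T$ at once: that besides $uv$ at most two prescribed edges are disturbed (and that each such disturbed $ab$ becomes requested and receives a non-$d$-overloaded color); that no new conflict with $L$ is introduced; and that no new requested edge inside $G_1$ or $G_2$ appears. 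These are secured by choosing the second edge in each application of the Step~IV lemmas to avoid prescribed edges; by observing that the final swap is on a $\{c_1,c_2\}$-cycle, which explains the stated bounds of at most three $c_1$-edges and four $c_2$-edges in $T$ (coming from Lemmas \ref{swapedgeinK1}--\ref{swapedgecolor} plus the two cycle edges); and by exploiting $\varphi'(uv)=c_2\notin L(uv)$ together with analogous list-avoidance checks for the remaining three edges of the final $4$-cycle, which in turn come for free from the ``no new conflict'' clauses of the Step~IV lemmas.
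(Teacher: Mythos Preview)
Your overall plan---split into cases by the location of $uv$ and the range of $c_2$, set up a $\{c_1,c_2\}$-alternating $4$-cycle through $uv$ using the transfer lemmas, then finish with one swap---matches the paper's proof exactly, and your identification of $a=2$ and of the role of $e_1,e_2$ as the possibly-prescribed ``first edges'' is correct.

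However, your execution of the principal case ($uv\in E(K_{n,n})$, $c_2\in\{1,\dots,n\}$) has a genuine gap. You fix $x$ to be \emph{the} $c_2$-neighbour of $u$ and then $z$ to be \emph{the} $c_1$-neighbour of $x$; both vertices are completely forced. Lemma~\ref{swapedgeinK1} only guarantees success for all but a bounded number of choices of the second vertex, so you have no way to ensure that this particular $z$ is admissible (e.g.\ that $c_2\notin L(vz)$, that the $c_2$-edge at $z$ is not disturbed or prescribed, etc.). Worse, since the first edge $vw=e_2$ in your application of Lemma~\ref{swapedgeinK1} has colour $c_2$, you cannot place $c_2$ in the excluded set $\{t_1,\dots,t_a\}$, so the edge $ux$ (also colour $c_2$) may lie in the subgraph produced by that lemma and lose its colour before the final swap.

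The paper avoids this by never pinning the auxiliary edge: in each case it chooses a \emph{free} edge $xy$ with $h''(xy)=c_1$ (or, in Case~3, uses Lemma~\ref{swapedgecolor} to manufacture one) from among $\Theta(n)$ candidates, and the conditions ``$uy$ and $vx$ are valid second edges for Lemmas~\ref{swapedgeinK1}/\ref{swapedgeinG}/\ref{swapedgeinKnew}'' are imposed as part of the choice of $xy$. Two applications of a transfer lemma then move $c_2$ onto $uy$ and $vx$, and the final swap is on $uvxyu$. This is why Case~1 already needs roughly $2\cdot 34$ edges (the paper quotes $69$ from \cite{AndrenCasselgrenMarkstrom}), not just $34$ as your one-shot argument would give. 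Restructure your Case~1 along these lines and the remaining cases will follow by the same template; your sketches for Cases~2--4 are then essentially the paper's argument.
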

	
	\begin{proof}
	We shall contruct a subgraph $T$ of $K_{2n}$, and by performing a
	sequence of swaps on $4$-cycles of $T$,
	we shall obtain the coloring $h^T$ from $h''$,
	where $h^T$ and $\varphi'$ agree on
	the edge $uv$. We will accomplish this
	by applying Lemmas \ref{swapedgeinK}, \ref{swapedgeinK1},
	\ref{swapedgeinG}, \ref{swapedgeinKnew}, \ref{swapedgecolor},
	and in our application of these lemmas,
	we will avoid the colors $\{c_1,c_2\}$; so $a=2$.
	
	Let $e_1$ and $e_2$ be the requested edges
	incident with $u$ and $v$, respectively, satisfying that
	$h''(e_1)= h''(e_2)=c_2$.
	
	We shall consider four different cases.
	
	\vspace{0.5cm}
	\noindent
	\textbf{Case \pmb{$1$}}. $uv \in E(K_{n,n})$ and $c_2 \in \{1,2,\dots,n\}$:
	
	\medskip
	
	\noindent
	Since under $h''$, all requested edges with colors in
	$\{1,2,\dots,n\}$ are in $K_{n,n}$, $e_1, e_2 \in E(K_{n,n})$.
	Moreover, by assumption $c_2 \in \{1,\dots,n\}$,
	so we can proceed as in the proof of Lemma $3.7$ in
	\cite{AndrenCasselgrenMarkstrom} and use swaps on $4$-cycles,
	all edges of which are contained in $K_{n,n}$, to obtain
	a coloring $h^T$ where $h^T(uv)=c_2$. 
	Note also that this implies that every precolored edge $e$ of $K_{n,n}$
	that satisfies $h''(e) \in \{1,\dots,n\}$, also satisfies
	$h^T(e) \in \{1,\dots,n\}$.
	
	The swaps needed for obtaining the required coloring will involve
	at most $69$ edges, as described in proof of Lemma $3.7$ in
	\cite{AndrenCasselgrenMarkstrom}.
	The exact details of the transformation of the coloring $h''$ into $h^T$
	can be found in \cite{AndrenCasselgrenMarkstrom},
	so we omit them here.
	
	\vspace{0.5cm}
	\noindent
	\textbf{Case \pmb{$2$}}. $uv \in E(K_{n,n})$ and $c_2 \in \{n+1,n+2,\dots,m\}$:
	
	\medskip
	
	\noindent
	In this case, we will contruct a subgraph $T$ with at most $136$ edges.
	Without loss of generality, we assume that $u \in V(G_1)$, 
	this implies $v \in V(G_2)$. 
	By assumption $c_1 \in \{1,\dots,n\}$;
	we choose an edge $xy \in E(K_{n,n})$ ($x \in V(G_1)$ and $y \in V(G_2)$), 
	with $h''(xy)=c_1$
	such that the following properties hold.
	\begin{itemize}
	\item The edge $xy$ is not disturbed and not prescribed and $c_2 \notin L(xy)$. 
	Since for each pair of colors $c_1,c_2 \in \{1,2,\dots,m\}$, 
	there are at most $c(n)$ edges $e$
	in $K_{2n}$ with $h'(e) =c_1$ and $c_2 \in L(e)$, 
	and at most $H(n)$ edges of color $c_1$ have been used 
	in the swaps for transforming $h'$ into $h''$,
	this eliminates at most $H(n)+2c(n)+P(n)+c(n)$ choices.
	\item The vertex $x$ satisfies the following.
	        \begin{itemize}
	        
					\item If $e_2 \in E(G_2)$, then we choose $x$ such that 
					$vx$ is a valid choice for the second edge 
					in an application of Lemma \ref{swapedgeinG}. 
		This eliminates at most 
		$$6c(n) + 2P(n) + 2\beta m + 2\alpha m + 5 + 4 \dfrac{k+(34+136)/n^2}{d} n +
		2H(n)$$ choices.
		
		\item If $e_2 \in E(K_{n,n})$, then since 
		$h''(e_2)=c_2 \in \{n+1,n+2,\dots,m\}$, we choose  $x$ such that $vx$ 
		is a valid choice for the second edge 
		in an application of Lemma \ref{swapedgeinKnew}.
		This eliminates at most $$5c(n) + 2P(n)  + \alpha m + \beta m +
		4 + 2\dfrac{k+(67+136)/n^2}{d} n + 2H(n)$$ choices.
		
		So in both cases, this choosing process eliminates at most
		$$6c(n) + 2P(n) + 2\beta m + 2\alpha m + 5 + 4 \dfrac{k+203/n^2}{d} n + 2H(n)$$ choices.
	        \end{itemize}
	\item The vertex $y$ is chosen with same strategy as $x$. 
	Similarly, this eliminates at most 
		$$6c(n) + 2P(n) + 2\beta m + 2\alpha m + 5 + 4 \dfrac{k+203/n^2}{d} n + 2H(n)$$ choices.
	\item $c_1 \notin L(uy) \cup L(vx)$. This excludes at most $2\beta m$ choices.
	\end{itemize}
	Thus we have at least $$n - \Big(15c(n) + 5P(n) + 6\beta m + 
	4\alpha m + 10 + 8 \dfrac{k+203/n^2}{d} n + 5H(n)\Big)$$
	choices for an edge $xy$. Since this expression is greater than zero by assumption,
	we can indeed make the choice.
	
	Now, since $M>0$, we can apply Lemma \ref{swapedgeinG} 
	or Lemma \ref{swapedgeinKnew} 
	to obtain a coloring where $uy$ is colored $h''(e_1)$.
	Similarly, we can apply Lemma \ref{swapedgeinG} 
	or Lemma \ref{swapedgeinKnew} to thereafter obtain a coloring
	where $vx$ is colored $h''(vx)$. Next, by swapping on the $2$-colored $4$-cycle
	$uvxyu$, we get the proper coloring $h^T$ such that $h^T(uv)=h''(e_1)=c_2$.
	Since the swaps 
	from the applications of 
	Lemma \ref{swapedgeinG} and Lemma \ref{swapedgeinKnew}
	do not result in any ``new'' requested edges in $G_1$ or $G_2$,
	the swaps used in this case do not no yield 
	any new requested edges in $G_1$ or $G_2$; similarly for all conflict
	edges of $K_{2n}$.
	
	Here, the subgraph $T$ contains the edges $uv$ and $xy$ and all 
	additional edges used when applying the previous lemmas above;
	in total there are at most $2+67 \times 2=136$ edges in $T$.
	
	Note further that besides $uv$ the only edges of 
	$K_{2n}$ that might be prescribed and are used
	in swaps for constructing $h^T$ are $e_1$ and $e_2$; this property
	shall be used when applying Lemma \ref{correctedge}. 
	
	\vspace{0.5cm}
	\noindent
	\textbf{Case \pmb{$3$}}. $uv \in E(G_1)$ (or $uv \in E(G_2)$) 
	and $c_2 \in \{1,2,\dots,n\}$:
	
	\medskip
	
	\noindent
	In this case, we shall construct a subgraph $T$ with at most $139$ edges.
	Without loss of generality, we shall assume that $uv \in E(G_1)$. Moreover,
	since  all requested edges with a  color in $\{1,2,\dots,n\}$ under $h''$ are 
	in $K_{n,n}$, $e_1, e_2 \in E(K_{n,n})$.

	If $c_1 \in \{n+1,n+2,\dots,m\}$, then we choose an 
	edge $xy \in E(G_2)$ such that $h''(xy)=c_1$
	and $xy$ is not prescribed or disturbed.
	If $c_1 \in \{1,2,\dots,n\}$, then
	we choose an edge $xy \in E(G_2)$ to be the
	first edge in an application of Lemma \ref{swapedgecolor};
	this choice implies that $xy$ is not prescribed and not disturbed. 
	So in both case we can have at least
	$$\left \lfloor{\frac{n}{2}}\right \rfloor - 7c(n) - 
	3P(n) - dn - 2H(n)$$ choices for $xy$.
	
	In addition to this, $xy$ also needs to satisfy the following:
	\begin{itemize}
	\item The edges $ux$ and $vy$ are valid choices for the second edge 
	in an application of Lemma \ref{swapedgeinK1}. 
	This eliminates at most $$2\Big(4c(n) + P(n) + 
	2\beta m + 2\alpha m + 5 + 4 \dfrac{k+(34+139)/n^2}{d} n + H(n)\Big)$$ choices.	
	\item $c_2 \notin L(xy) \cup \varphi'(x) \cup \varphi'(y) 
	\setminus \{\varphi'(xy)\}$.
	This condition will imply that after performing all swaps in this case, 
	$xy$ is not a ``new'' requested edge of $G_2$.
	Since there are at most $c(n)$ edges $e$
	in $K_{2n}$ such that $h'(e) =c_1$ and $c_2 \in L(e)$, 
	and we have already excluded the choices for $xy$ which are disturbed, 
	this condition eliminates at most $c(n) + 2(2c(n)+P(n))$ choices.
	
	\item $c_1 \notin L(ux) \cup L(vy)$. This excludes at most $2\beta m$ choices.

	\end{itemize}
	So in final, we have at least 
	$$\left \lfloor{\frac{n}{2}}\right \rfloor - \Big(20c(n) + dn + 7P(n) + 6\beta m + 4\alpha m + 10 + 8 \dfrac{k+173/n^2}{d} n + 4H(n)\Big)$$	
	choices for $xy$. Since this expression is greater than zero 
	by assumption, we can indeed make the choice.
	
	Since $M>0$, firstly if $c_1 \in \{1,2,\dots,n\}$, 
	we can apply Lemma \ref{swapedgecolor} to obtain a coloring
	where $xy$ is colored $c_1$. 
	Secondly, we can apply Lemma \ref{swapedgeinK1} twice to 
	obtain a coloring where $ux$ is colored $h''(e_1)$ and $vy$ is colored
	$h''(e_2)$. Finally, by swapping on the $2$-colored $4$-cycle
	$uvyxu$, we get the proper coloring $h^T$ such that $h^T(uv)=h''(e_1)=c_2$.
	Note that this implies that $uv$ is not a requested edge under $h^T$.
	More generally, since the swaps from the applications of 
	Lemma \ref{swapedgeinK1} and Lemma \ref{swapedgecolor}
	do not result in any new requested edges in $G_1$ or $G_2$,
	the swaps used in this case do not yield 
	any new requested edges in $G_1$ or $G_2$; similarly for conflict edges 
	in $K_{2n}$.
	
	Here, the subgraph $T$ contains the 
	edges $uv$ and $xy$ and all the additional edges needed to apply the lemmas above 
	(if we need to apply Lemma \ref{swapedgecolor}, 
	then $xy$ is included in the edges used when applying this lemma);
	in total, $T$ uses at most $1+ 70 + 34 \times 2=139$ edges.
	
	\vspace{0.5cm}
	
	\noindent
	\textbf{Case \pmb{$4$}}. $uv \in E(G_1)$ (or $uv \in E(G_2)$) 
	and $c_2 \in \{n+1,n+2,\dots,m\}$:
	
	\medskip
	
	\noindent
	In this case we proceed similarly to Case $3$; 
	using the same setup as in Case 3, a slight difference between two cases
	is that in Case $4$, 
	we can use Lemma \ref{swapedgeinG} or Lemma \ref{swapedgeinKnew} to
	obtain a coloring where $ux$ is colored $h''(e_1)$
	and $vy$ is colored  $h''(e_2)$. Similar calculations as above yields that 
	we have at least
	$$\left \lfloor{\frac{n}{2}}\right \rfloor - \Big(24c(n) + dn + 
	9P(n) + 6\beta m + 4\alpha m + 10 + 8 \dfrac{k+(67+205)/n^2}{d} n + 6H(n)\Big)$$	
	choices for $xy$; and this expression is greater than zero by assumption, 
	so we can indeed make the choice and perform the necessary swaps to
	get the coloring $h^T$ satisfying
	$h^T(uv)=h''(e_1)=c_2$. Here, $T$ contains at most $1+ 70 + 67 \times 2=205$ edges.
	
	\bigskip
	
	Finally, let us note that 
	in the first two cases, $T$ contains exactly two edges 
	with color $c_1$ under $h''$.
	In the last two cases, $T$ contains exactly two edges 
	with color $c_1$ under $h''$ if we
	do not have to apply Lemma \ref{swapedgecolor}; 
	otherwise $T$ contains exactly three edges with color $c_1$ under $h''$.
	Any application of Lemma \ref{swapedgeinK1}, \ref{swapedgeinG} 
	or \ref{swapedgeinKnew}
	above uses at most two edges with color $c_2$ under $h''$, so
	the subgraph $T$ contains at most four edges with color $c_2$ under $h''$.
	Except $c_1$ and $c_2$, the subgraph $T$ only contains edges with 
	colors that are not $d$-overloaded.
	\end{proof}
	
	We will take care of every prescribed edge $e$ of $K_{2n}$ 
	such that $h'(e) \neq \varphi'(e)$ by successively applying 
	Lemma \ref{correctedge}; using this lemma we can construct the
	proper $m$-edge colorings of $K_{2n}$ $h_0=h'$, 
	$h_1$, $h_2$,
	\dots, $h_q$, where $h_i$ is constructed from $h_{i-1}$ by an application of 
	Lemma \ref{correctedge} and $h_q$ is a extension
	of $\varphi'$. 
	Since the number of prescribed edge at each vertex of $K_{2n}$ 
	is at most $\alpha m+ c(n)$, the total number
	of prescribed edges in $K_{2n}$ is at most $2n(\alpha m+c(n))$, 
	thus $q \leq 2n(\alpha m+c(n))$.
	
	When we apply Lemma \ref{correctedge}, we first consider all prescribed edges $e$
	in $K_{n,n}$ that satisfies $\varphi'(e) \in \{1,\dots, n\}$ (Case 1 in the lemma).
	This is important, since otherwise we might recolor such edges by colors
	from $\{n+1,\dots, m\}$, and are thereafter unable to apply Lemma \ref{correctedge}.
	
	Thereafter we apply Lemma \ref{correctedge} to all prescribed edges $e$ of $K_{n,n}$
	that satisfies $\varphi'(e) \in \{n+1, \dots,m\}$ 
	(Case 2 in Lemma \ref{correctedge}).
	Note that after performing all the swaps as described in the preceding paragraph,
	we have not recolored any edge of $G_1$ or $G_2$.
	Thus, if one of the requested edges $e_1$ and $e_2$ 
	in Case 2 of the proof of Lemma \ref{correctedge}
	is in $K_{n,n}$, then it has been used in a previous application of
	Lemma \ref{correctedge} to a prescribed edge $e'$ of $K_{n,n}$
	that satisfies $\varphi'(e') \in \{n+1, \dots,m\}$.
	Moreover, since the only prescribed edges that are used in an application
	of Lemma \ref{correctedge} is $uv$ and (possibly) 
	the requested edges $e_1$ and $e_2$,
	it follows  that every prescribed edge $e$ in $K_{n,n}$ that satisfies
	$\varphi'(e) \in \{n+1,\dots, m\}$ is not recolored
	by a color from $\{n+1,\dots,m\} \setminus \{\varphi'(e)\}$
	in the process of applying Lemma  \ref{correctedge} to the prescribed
	edges $e$ of $K_{n,n}$ that satisfies $\varphi'(e) \in \{n+1, \dots,m\}$.
	Thus, we may assume that $h''(e) \in \{1,\dots, n\}$ for any intermediate coloring
	$h''$ and any precolored edge $e$ in $K_{n,n}$. Hence, we can perform
	all the swaps as described in Case 2 in the proof of Lemma \ref{correctedge}.
	Thereafter, we consider all prescribed edges of $G_1$ and $G_2$.

	In an application of Lemma \ref{correctedge} 
	to obtain $h_i$ from $h_{i-1}$, 
	we use swaps involving at most three prescribed edges: the edge $uv$, 
	and the two adjacent requested edges
	$e_1$ and $e_2$.
	Since there are at most $2c(n)$ prescribed edges in $K_{2n}$ 
	with any given color $c$ in $h'$,
	there are at most $\alpha m+f(n)$ prescribed edges with 
	color $c$ under $\varphi'$, and
	$h_i(e_1)$ and $h_i(e_2)$ are not $d$-overloaded colors 
	in the coloring $h_{i-1}$, 
	it follows that
	for each $i=1,\dots,q$, there are at most 
	$2c(n)+ dn+ \alpha m+f(n)$ edges with color $c$ under $h_i$ that are prescribed.
	
	Furthermore, each application of Lemma \ref{correctedge} 
	to a prescribed edge $uv$ with $h'(uv)=c$
	constructs a subgraph $T$ with at most three edges with color 
	$c$ under $h'$; thus a color $c$ is used at most 
	$3\big(2c(n)+dn+\alpha m+f(n)\big)$ times in a subgraph $T$
	where a prescribed edges has color $c$ in $h'$.
	Moreover, there are at most $\alpha m+f(n)$ prescribed edges 
	with color $c$ under $\varphi'$, and a subgraph $T$ constructed
	by an application of Lemma \ref{correctedge} uses at most four edges with color $c$.
	Except for these edges, any other edges contained in a subgraph 
	created by an application of  Lemma \ref{correctedge} are colored by 
	colors that are not $d$-overloaded. 
	Hence, at most
	$$4\big(\alpha m+f(n)\big)+3\big(2c(n)+dn+\alpha m+
	f(n)\big)+dn= 7\alpha m+7f(n)+6c(n)+4dn$$
	distinct edges with color $c$ under $h'$ are used in swaps for 
	constructing $h_q$ from $h'$.
	
	Let $H(n)=7\alpha m+7f(n)+6c(n)+4dn$, $P(n)=dn+\alpha m+f(n)$; 
	from the preceding paragraph we deduce that
	as long as $kn^2 \geq 205 \times 2n(\alpha m+c(n))=410n(\alpha m+c(n))$
	and 
	$$\left \lfloor{\frac{n}{2}}\right \rfloor - \Big(2\epsilon n + 24c(n) + 6dn + 9P(n) + 6\beta m + 4\alpha m + 10 + 8 \dfrac{k+272/n^2}{d} n + 6H(n)\Big)>0$$
	$$c'(n)=c(n)/2; \; n-1>2c(n)>4; \;
         \Big( \dfrac{4\beta}{\epsilon - 4\beta}\Big)^{\epsilon - 4\beta} \Big( \dfrac{1}{1 - 2\epsilon + 8\beta}\Big)^{1/2-\epsilon + 4\beta} <1$$
         $$\alpha, \beta < \dfrac{c(n)}{2(n-c(n))} \Big(\dfrac{n - c(n)}{n}\Big)^{\frac{n}{c(n)}}; \;
         \beta<\dfrac{c'(n)}{2(n-c'(n))} \Big(\dfrac{n - c'(n)}{n}\Big)^{\frac{n}{c'(n)}}$$
          $$m - \beta m -2 \alpha m -  2c(n) - \dfrac{2nc(n)}{f(n)} \geq 1$$ 
         for some constants $\alpha, \beta, \epsilon, k,d$ 
				and functions $c(n)$, $f(n)$ of $n$, 
         we can apply Lemma \ref{alpha} to obtain $h'$, 
				Lemma \ref{gamma} to obtain $\varphi'$ and finally Lemma  \ref{correctedge} 
         to obtain the coloring $h_q$ which is a completion 
				of $\varphi'$ that avoids $L$.
         This completes the proof of Theorem \ref{mainth}.

	\end{proof}
		
\end{document}